\documentclass{amsart}

\usepackage{amsmath} 
\usepackage{amssymb}

\newcommand{\forces}{\Vdash} 
\newcommand{\bV}{{\bf V}} 
\newcommand{\lesdot}{\mathrel{\mathord{<}\!\!\raise 
0.8 pt\hbox{$\scriptstyle\circ$}}} 
\newcommand{\supp}{{\rm supp}}

\newcommand{\cf}{{\rm cf}\/} 
\newcommand{\can}{{}^{\textstyle \omega}2} 
\newcommand{\fs}{{}^{\textstyle\omega{>}}2} 
\newcommand{\baire}{{}^{\textstyle \omega}\omega} 
 
\newcommand{\fsuo}{[\omega]^{\textstyle <\!\omega}} 
 
\newcommand{\lh}{\ell g\/} 
\newcommand{\rest}{{\restriction}}
\newcommand{\dom}{{\rm dom}} 
\newcommand{\rng}{{\rm rng}}

\newcommand{\ppn}{{\rm pnor}}
\newcommand{\nor}{{\rm nor}}
\newcommand{\val}{{\rm val}}
\newcommand{\rhalf}{{\rm half}}
\newcommand{\trunklg}{{\rm trunklg}}
\newcommand{\prodval}{\val^{\Pi}}
\newcommand{\conc}{{}^\frown\!}
\newcommand{\nF}{{\name{F}}}
\newcommand{\qfor}{{{\mathbb Q}^*_\infty(\bK,\bSig)}}
\newcommand{\qprod}{{{\mathbb P}_I(\bK,\bSig)}}
\newcommand{\qfl}{{{\mathbb P}_\lambda(\bK^*,\bSig^*)}}

\newcommand{\DO}{{\mathfrak D}{\mathfrak O}}
\newcommand{\fdo}{{\mathfrak d}{\mathfrak o}}
\newcommand{\bdo}{{\mathfrak d}_{{\mathcal B}}}


\newcommand{\bbA}{{\mathbb A}}
\newcommand{\cB}{{\mathcal B}}
\newcommand{\bbB}{{\mathbb B}}

\newcommand{\cC}{{\mathcal C}}
\newcommand{\gc}{{\mathfrak c}}
\newcommand{\gd}{{\mathfrak d}}

\newcommand{\cF}{{\mathcal F}}
\newcommand{\cI}{{\mathcal I}}

\newcommand{\cH}{{\mathcal H}}
\newcommand{\bH}{{\mathbf H}}
\newcommand{\bK}{{\mathbf K}}
\newcommand{\cK}{{\mathcal K}}
\newcommand{\cN}{{\mathcal N}}
\newcommand{\cM}{{\mathcal M}}
\newcommand{\bSig}{{\mathbf \Sigma}}
\newcommand{\cP}{{\mathcal P}}
\newcommand{\bbP}{{\mathbb P}}
\newcommand{\bQ}{{\mathbb Q}}  

\newcommand{\bP}{{\mathbb P}}

\newcommand{\mbR}{{\mathbb R}}
\newcommand{\cS}{{\mathcal S}}

\newcount\skewfactor
\def\mathunderaccent#1#2 {\let\theaccent#1\skewfactor#2
\mathpalette\putaccentunder}
\def\putaccentunder#1#2{\oalign{$#1#2$\crcr\hidewidth
\vbox to.2ex{\hbox{$#1\skew\skewfactor\theaccent{}$}\vss}\hidewidth}}
\def\name{\mathunderaccent\tilde-3 }

\newtheorem{theorem}{Theorem}[section] 
\newtheorem{claim}{Claim}[theorem]
\newtheorem{lemma}[theorem]{Lemma} 
\newtheorem{proposition}[theorem]{Proposition} 
\newtheorem{corollary}[theorem]{Corollary} 

\theoremstyle{definition}
\newtheorem{problem}[theorem]{Problem} 
 
\newtheorem{definition}[theorem]{Definition}

\theoremstyle{remark}

\newtheorem{remark}[theorem]{Remark}

\setcounter{section}{-1}
\begin{document}

\dedicatory {Dedicated to L\'{a}szl\'{o} Fuchs for his ninetieth birthday}

\title{Monotone hulls for ${\mathcal N}\cap {\mathcal M}$}

\author{Andrzej Ros{\l}anowski}
\address{Department of Mathematics\\
 University of Nebraska at Omaha\\
 Omaha, NE 68182-0243, USA}
\email{roslanow@member.ams.org}
\urladdr{http://www.unomaha.edu/logic}

\author{Saharon Shelah}
\address{Institute of Mathematics\\
 The Hebrew University of Jerusalem\\
 91904 Jerusalem, Israel\\
 and  Department of Mathematics\\
 Rutgers University\\
 New Brunswick, NJ 08854, USA}
\email{shelah@math.huji.ac.il}
\urladdr{http://www.math.rutgers.edu/$\sim$shelah}

\thanks{Both authors acknowledge support from the United States-Israel
Binational Science Foundation (Grant no. 2006108). This is publication
972 of the second author.}

\subjclass{Primary 03E17; Secondary: 03E35, 03E15}
\date{July 15, 2014}

\begin{abstract}
  Using the method of decisive creatures (see Kellner and Shelah
  \cite{KrSh:872}) we show the consistency of ``there is no increasing
  $\omega_2$--chain of Borel sets and ${\rm non}(\cN)= {\rm non}(\cM)={\rm
    non}(\cN\cap\cM)=\omega_2=2^\omega$''. Hence, consistently, there are no
  monotone Borel hulls for the ideal ${\mathcal M}\cap {\mathcal N}$. This
  answers Balcerzak and Filipczak \cite[Questions 23, 24]{BaFi11}. Next we
  use finite support iteration of ccc forcing notions to show that there may
  be monotone Borel hulls for the ideals $\cM,\cN$ even if they are not
  generated by towers.
\end{abstract}

\maketitle

\section{Introduction}
Brendle and Fuchino \cite[Section 3]{BrFu07} considered the following
spectrum of cardinal numbers
\[\begin{array}{ll}
\DO=\big\{\cf({\rm otp}(\langle X, R\rest X\rangle)):& R\subseteq
\can\times\can \mbox{ is a projective binary relation,}\\
& X\subseteq \can\mbox{ and } R\cap X^2 \mbox{ is a well ordering of } 
X\big\} 
\end{array}\]
and they introduced a cardinal invariant $\fdo=\sup\DO$. The invariant $\fdo$
satisfies $\min\{{\rm non}(\cI),{\rm cov}(\cI)\}\leq \fdo$ for every ideal
$\cI$ on $\mbR$ with Borel basis (see \cite[Lemma 3.6]{BrFu07}). The proof
of Kunen \cite[Theorem 12.7]{Ku68} essentially shows that adding any number
of Cohen (or random) reals to a model of CH results in a model in which
$\fdo=\aleph_1$. Thus both 
\[\begin{array}{l}
{\rm non}(\cN)={\rm cov}(\cM)=\aleph_2+{\rm non}(\cM)={\rm cov}(\cN)=
\fdo=\aleph_1,\mbox{ and}\\ 
{\rm non}(\cM)={\rm cov}(\cN)=\aleph_2+{\rm non}(\cN)={\rm cov}(\cM)=
\fdo=\aleph_1
\end{array}\]
are consistent (where $\cM,\cN$ stand for the ideals of meager and null
sets, respectively). This naturally leads to the question if 
\begin{enumerate}
\item[$(\circledast)$] ${\rm non}(\cM)={\rm non}(\cN)={\rm non}(\cN\cap
  \cM)=\aleph_2+ \fdo=\aleph_1={\rm cov}(\cN)={\rm cov}(\cM)$
\end{enumerate}
is consistent. In this note we show the consistency of $(\circledast)$ using
the method of {\em decisive creatures\/} developed in Kellner and Shelah
\cite{KrSh:872}, and this method is in turn a special case of the method of
{\em norms on possibilities\/} of Ros{\l}anowski and Shelah \cite{RoSh:470}.  

Note that if there is a $\subset$--increasing $\kappa$--chain of Borel
subsets of $\can$, then $\cf(\kappa)\in\DO$. (Just consider a relation $R$
on $\can\simeq\can\times \can$ given by:\quad $(x,y)\; R\; (x',y')$ if and
only if `` $y,y'$ are Borel codes and $x$ belongs to the set coded by $y'$
''; cf. Elekes and Kunen \cite[Lemma 2.4]{ElKu03}.) Thus if we set 
\[\bdo=\sup\big\{\cf(\gamma):\mbox{ there is a $\subset$--increasing chain
  of Borel subset of $\mbR$ of length $\gamma$ }\big\}\]
then $\bdo\leq\fdo$. If $\bdo$ is smaller than the cofinality of the
uniformity number ${\rm non}(\cI)$ of a Borel ideal $\cI$, then there is no
monotone Borel hull operation on $\cI$ (see Elekes and M\'{a}th\'{e}  
\cite[Theorem 2.1]{ElMa09}, Balcerzak and Filipczak \cite[Theorem
5]{BaFi11}). Thus  
\begin{enumerate}
\item[$(\otimes)$] if $\cI$ is an ideal with Borel basis on $\mbR$, $\bdo<
  {\rm non}(\cI)$ and ${\rm non}(\cI)$ is a regular cardinal, then there is
  no $\subset$--monotone mapping $\psi:\cI\longrightarrow {\rm
    Borel}(\mbR)\cap \cI$.  
\end{enumerate}
Therefore in our model for $(\circledast)$ we will have (Corollary
\ref{cor3}) 
\smallskip

``there are no monotone Borel hull operations on the ideals $\cM,\cN$ and
$\cM\cap \cN$''.

\noindent This answers Balcerzak and Filipczak \cite[Question 23]{BaFi11}.
\medskip

We also obtain a positive result providing a new situation in which monotone 
hulls exist. Consistently, the ideals $\cM,\cN$ do not possess tower--basis
but they do admit monotone Borel hulls (Corollary \ref{cor4}). This model is
obtained by finite support iterations of partial Amoeba for Category and
Amoeba for Measure $\bbA$ forcing notions. 
\medskip

\noindent{\bf Notation}\qquad Most of our notation is standard and
compatible with that of classical textbooks (like Bartoszy\'nski and Judah
\cite{BaJu95}). However, in forcing we keep the older convention that {\em a
  stronger condition is the larger one}.  \medskip

$\bullet$ For two sequences $\eta,\nu$ we write $\nu\vartriangleleft\eta$
whenever $\nu$ is a proper initial segment of $\eta$, and
$\nu\trianglelefteq\eta$ when either $\nu\vartriangleleft\eta$ or
$\nu=\eta$. The length of a sequence $\eta$ is denoted by $\lh(\eta)$.
A {\em tree} is a family $T$ of finite sequences closed under initial
segments. For a tree $T$, the family of all $\omega$--branches through $T$
is denoted by $[T]$. 

$\bullet$ The Cantor space $\can$ is the space of all functions from
$\omega$ to $2$, equipped with the product topology generated by
sets of the form $[\nu]=\{\eta\in\can:\nu\vartriangleleft\eta\}$  for
$\nu\in\fs$. This space is also equipped with the standard product
measure $\mu$.

$\bullet$ For a forcing notion $\bP$, all $\bP$--names for objects in the
extension via $\bP$ will be denoted with a tilde below (e.g.~$\name{A}$,
$\name{\eta}$). The canonical name for a $\bbP$--generic filter over $\bV$
is denoted $\name{G}_\bbP$. Our notation and 
terminology concerning creatures and forcing with creatures will be
compatible with that in \cite{KrSh:872} (except of the reversed
orders). While this is a slight departure from the original terminology
established for creature forcing in \cite{RoSh:470}, the reader may find it
more convenient when verifying the results on decisive creatures that are
quoted in the next section.

\section{Background on decisive creatures}
As declared in the introduction, we will follow the notation and the context 
of \cite{KrSh:872} (which slightly differs from that of
\cite{RoSh:470}). For reader's convenience we will recall here all relevant
definitions and results from that paper.

Let $\bH:\omega\longrightarrow \cH(\aleph_0)$ (where $\cH(\aleph_0)$
is the family of all hereditarily finite sets). A {\em creating
  pair\/} for $\bH$ is a pair $(\bK,\bSig)$, where 
\begin{itemize}
\item $\bK=\bigcup\limits_{n<\omega}\bK(n)$, where each $\bK(n)$ is
  a finite set; elements of $\bK$ are called {\em creatures}, each
  creature $\gc\in\bK(n)$ has some norm $\nor(\gc)$ (a non-negative
  real number) and a non-empty set of possible values
  $\val(\gc)\subseteq \bH(n)$,  
\item if $\gc\in\bK(n)$, $\nor(\gc)>0$, then $|\val(\gc)|>1$
\item $\bSig:\bK\longrightarrow {\mathcal P}(\bK)$ is such that if
  $\gc\in\bK(n)$ and $\gc'\in\bSig(\gc)$, then $\gc'\in\bK(n)$, 
\item $\gc\in\bSig(\gc)$ and $\gc'\in \bSig(\gc)$ implies
  $\bSig(\gc')\subseteq \bSig(\gc)$, 
\item if $\gc'\in\bSig(\gc)$, then $\nor(\gc')\leq \nor(\gc)$ and
  $\val(\gc')\subseteq \val(\gc)$. 
\end{itemize}
If $\gc\in\bK$ and $x\in\bH(n)$, then we write $x\in\bSig(\gc)$ if and only
if $x\in\val(\gc)$. For $x\in\bH(n)$ we also set $\bSig(x)=\val(x)=\{x\}$. 

\begin{definition}
[See {\cite[Definitions 3.1, 4.1]{KrSh:872}}] 
\label{defbig}
  Let $0<r\leq 1$, $B,K,m$ be positive integers and $(\bK,\bSig)$ be a
  creating pair for $\bH$.
\begin{enumerate}
\item A creature $\gc$ is $r$--halving if there is a
  $\rhalf(\gc)\in\bSig(\gc)$ such that 
\begin{itemize}
\item $\nor(\rhalf(\gc))\geq \nor(\gc)-r$, and
\item if $\gd\in\bSig(\rhalf(\gc))$ and $\nor(\gd)>0$, then there is a
  $\gd'\in\bSig(\gc)$ such that
\[\nor(\gd')\geq \nor(\gc)-r\quad\mbox{ and } \quad
\val(\gd')\subseteq\val(\gd).\]  
\end{itemize}
$\bK(n)$ is $r$--halving, if all $\gc\in\bK(n)$ with $\nor(\gc)>1$ are
$r$--halving. 
\item A creature $\gc$ is $(B,r)$--big if for every function
  $F:\val(\gc)\longrightarrow B$ there is a $\gd\in\bSig(\gc)$ such that
  $\nor(\gd)\geq \nor(\gc)-r$ and the restriction $F\rest\val(\gd)$ is
  constant. We say that $\gc$ is hereditarily $(B,r)$-big, if every
  $\gd\in\bSig(\gc)$ with $\nor(\gd)>1$ is $(B,r)$-big. Also, $\bK(n)$ is
  $(B,r)$--big if every $\gc\in\bK(n)$ with $\nor(\gc)>1$ is $(B,r)$--big.
\item We say that $\gc$ is $(K,m,r)$--decisive, if for some $\gd^-,\gd^+\in
  \bSig(\gc)$ we have: $\gd^+$ is hereditarily $(2^{K^m},r)$--big, and
  $|\val(\gd^-)|\leq K$ and $\nor(\gd^-),\nor(\gd^+)\geq \nor(\gc)-r$. The
  creature $\gc$ is $(m,r)$--decisive if $\gc$ is $(K',m,r)$--decisive for
  some $K'$.
\item  $\bK(n)$ is $(m,r)$--decisive if every $\gc\in\bK(n)$
  with $\nor(\gc)>1$ is 
  $(m,r)$--decisive.  
\end{enumerate}
\end{definition}

\begin{lemma}
[See {\cite[Lemma 4.3]{KrSh:872}}] 
\label{lemdecisive}  
Assume that $(\bK,\bSig)$ is a creating pair for $\bH$, $k,m,t\geq 1$,
$0<r\leq 1$. Suppose that $\bK(n)$ is $(k,r)$--decisive and $\gc_0,
\ldots,\gc_{k-1}\in\bK(n)$ are hereditarily $(2^{m^t},r)$--big with
$\nor(\gc_i)>1+r\cdot (k-1)$ (for each $i<k$). Let
$F:\prod\limits_{i<k} \val(\gc_i)\longrightarrow 2^{m^t}$. Then
there are $\gd_0,\ldots,\gd_{k-1}\in \bK(n)$ such that: 

$\gd_i\in\bSig(\gc_i)$,\quad $\nor(\gd_i)\geq \nor(\gc_i)-r\cdot k$,\quad
and $F\rest\prod\limits_{i\in k} \val(\gd_i)$ is constant. 
\end{lemma}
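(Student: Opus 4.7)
The plan is a product-Ramsey argument that combines decisiveness (to control val sizes) with iterated bigness (to make $F$ constant on a sub-product). I would proceed in three stages.

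\emph{Preparation.} For each $i<k$, since $\nor(\gc_i)>1$ and $\bK(n)$ is $(k,r)$--decisive, pick a witness to $(K_i,k,r)$--decisiveness: creatures $\gc_i^-,\gc_i^+\in\bSig(\gc_i)$ with $\nor(\gc_i^\pm)\geq\nor(\gc_i)-r$, $|\val(\gc_i^-)|\leq K_i$, and $\gc_i^+$ hereditarily $(2^{K_i^k},r)$--big. Here $\gc_i^-$ is the ``small--val'' version (bounded by $K_i$), while $\gc_i^+$ has bigness much stronger than what is needed just to handle $2^{m^t}$ colors.

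\emph{Sequential refinement.} Process the coordinates $i=0,1,\ldots,k-1$ in turn. At stage $i$, freeze the other coordinates to their small-val versions $\gc_j^-$ for $j\neq i$, and view $F$ as a coloring of $\val(\gc_i^+)$ by functions $\prod_{j\neq i}\val(\gc_j^-)\to 2^{m^t}$, i.e., a coloring into at most $2^{m^t\cdot\prod_{j\neq i}K_j}$ values. Provided the $K_i$'s are arranged so that $K_i^k\geq m^t\cdot\prod_{j\neq i}K_j$ (for instance with a common $K\geq m^t$), the hereditary $(2^{K_i^k},r)$--bigness of $\gc_i^+$ applies and produces $\gd_i\in\bSig(\gc_i^+)$ on which the induced coloring is constant. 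Iterating across all $k$ coordinates makes $F$ constant on $\prod_{i<k}\val(\gd_i)$.

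\emph{Main obstacle.} The key technical difficulty is the norm bookkeeping together with the coordination of the ``small--val'' and ``big--bigness'' roles across stages. Each use of decisiveness or bigness costs $r$ in norm, and the budget $r\cdot k$ per coordinate allows for at most $k$ such operations per coordinate; hence earlier-refined coordinates must still admit decisiveness at later stages, which uses that $\bK(n)$ \emph{itself} (not just the starting creatures) is $(k,r)$--decisive, so sub-creatures inherit the relevant structure. Balancing the $K_i$'s so that the combinatorial explosion of colorings stays inside the bigness strength of each $\gc_i^+$ is exactly what forces the exponent $K^k$ in the definition of decisiveness: when there are $k$ coordinates to freeze, the product $\prod_{j\neq i}K_j$ has $k-1$ factors, so handling a coloring into $2^{m^t\cdot K^{k-1}}$ values requires bigness at the level $2^{K^k}$, precisely the strength that decisiveness provides.
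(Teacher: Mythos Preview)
The paper does not prove this lemma (it is quoted from \cite[Lemma~4.3]{KrSh:872}), but your sketch has a genuine gap. Your sequential stages do not combine: at stage $i$ you produce $\gd_i\in\bSig(\gc_i^+)$ making $F$ independent of coordinate $i$ on $\val(\gd_i)\times\prod_{j\neq i}\val(\gc_j^-)$, but the final product is supposed to be $\prod_i\val(\gd_i)$, and $\val(\gd_j)$ need not lie inside $\val(\gc_j^-)$ --- indeed $\gd_j$ came from $\gc_j^+$, which is typically incompatible with $\gc_j^-$. More fundamentally, the inequality $K_i^k\geq m^t\cdot\prod_{j\neq i}K_j$ cannot be arranged: decisiveness hands you \emph{some} $K_i$, not one of your choosing, and if $K_0$ happens to be enormous compared to $K_1$ then the inequality for $i=1$ simply fails. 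Your remark about ``a common $K$'' is precisely what is unavailable from a single application of decisiveness per coordinate. Your ``Main obstacle'' paragraph gestures at iterated decisiveness but the refinement stage as written does not implement it.

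The missing idea --- which you can see rehearsed inside the proof of Theorem~\ref{forcingOK}(2), where the enumeration $\alpha_0,\ldots,\alpha_{|A|-1}$ and the ladder $K_0<K_1<\ldots$ are built --- is to apply decisiveness \emph{repeatedly}. At step $\ell$, among the creatures not yet frozen, pick the one whose current decisiveness parameter is \emph{minimal}, call it $K_\ell$; freeze that coordinate to its small version (size $\leq K_\ell$) and pass every other remaining creature to its big version, which is then hereditarily $(2^{K_\ell^k},r)$--big. This forces $2^{K_\ell^k}<K_{\ell+1}$, so the frozen sizes form a rapidly increasing ladder, and each frozen $\gd_{j_\ell}$ (for $\ell\geq 1$) retains hereditary $(2^{K_{\ell-1}^k},r)$--bigness from the previous step. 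Now homogenize from the top down: at level $\ell$ the earlier frozen coordinates have product size at most $(K_{\ell-1})^{k-1}$ and $m^t\leq K_0\leq K_{\ell-1}$, so the induced coloring of $\val(\gd_{j_\ell})$ has at most $2^{K_{\ell-1}^k}$ values, matched exactly by the available bigness. Each coordinate undergoes at most $k-1$ ladder steps plus one homogenization, giving the norm loss $r\cdot k$.
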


A creating pair $(\bK,\bSig)$ determines the forcing notion $\qfor$
and its special product $\qprod$ as described by the following
definition. (The forcing notion $\qprod$ is a relative of the CS
product of $\qfor$ indexed by the set $I$.)  

\begin{definition}
[See {\cite[Definitions 2.1, 5.2, 5.3]{KrSh:872}}] 
\label{forcing}
\begin{enumerate}
\item   A condition in the forcing $\qfor$ is an $\omega$--sequence
  $p=\langle p(i):i<\omega\rangle$ such that for some $n<\omega$ (called the
  trunk-length of $p$) we have $p(i)\in \bH(i)$ if $i<n$, $p(i)\in
  \bK(i)$ and $\nor(p(i))>0$ if $i\geq n$, and
  $\lim\limits_{i\to\infty}(\nor(p(i)))=\infty$. 

The order on $\qfor$ is defined by $q\geq p$ if and only if (both
belong to $\qfor$ and) $q(i)\in\bSig(p(i))$ for all $i$.\footnote{Remember
  our convention that for $x,y\in\bH(i)$ and $\gc\in\bK(i)$ we write
  $x\in\bSig(\gc)$ iff $x\in\val(\gc)$, and  $x\in\bSig(y)$ iff $x=y$.}
\item Let $I$ be a non-empty (index) set. A condition $p$ in $\qprod$
  consists of a countable subset $\dom(p)$ of $I$, of objects $p(\alpha,n)$
  for $\alpha\in \dom(p)$, $n\in\omega$, and of a function
  $\trunklg(p,\cdot):\dom(p)\longrightarrow\omega$  satisfying the following
  demands for all $\alpha\in\dom(p)$:  
\begin{enumerate}
\item[$(\alpha)$] If $n<\trunklg(p,\alpha)$, then $p(\alpha,n)\in\bH(n)$. 
\item[$(\beta)$] If $n\geq \trunklg(p,\alpha)$, then $p(\alpha,n)\in\bK(n)$ 
      and $\nor(p(\alpha,n))>0$.
\item[$(\gamma)$] Setting $\supp(p,n)=\{\alpha\in\dom(p):\,
  \trunklg(p,\alpha)\leq n\}$, we have $|\supp(p,n)|<n$ for all $n>0$ and 
  $\lim\limits_{n\to\infty}(|\supp(p,n)|/n)=0$.
\item[$(\delta)$] $\lim\limits_{n\to\infty}(\min(\{\nor(p(\alpha,n)):\, 
  \alpha\in\supp(p,n)\}))= \infty$. 
\end{enumerate}
The order on $\qprod$ is defined by $q\geq p$ if and only if (both belong to
$\qprod$ and) $\dom(q)\supseteq \dom(p)$ and 
\begin{enumerate}
\item[$(\varepsilon)$] if $\alpha\in\dom(p)$ and $n\in\omega$, then
  $q(\alpha,n)\in\bSig(p(\alpha,n))$, 
\item[$(\zeta)$] the set $\{\alpha\in\dom(p):\trunklg(q,\alpha)\neq
  \trunklg(p,\alpha)\}$ is finite.
\end{enumerate}
\end{enumerate}
\end{definition}

Note that for $\alpha\in\dom(p)$ the sequence $\langle
p(\alpha,n):n\in \omega\rangle$ is in $\qfor$. However, $\qprod$ is
not a subforcing of the CS product of $I$ copies of $\qfor$ because of
a slight difference in the definition of the order relation. 

\begin{proposition}
[See {\cite[Lemmas 5.4, 5.5]{KrSh:872}}] 
\label{basprop}
\begin{enumerate}
\item If $J\subseteq I$, then $\bbP_J(K,\Sigma)=\{p\in
  \qprod:\dom(p)\subseteq J\}$ is a complete subforcing of $\qprod$. 
\item Assume {\rm CH}. Then $\qprod$ satisfies the $\aleph_2$--chain
  condition. 
\end{enumerate}
\end{proposition}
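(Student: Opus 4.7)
For part (1), my plan is to exhibit an explicit reduction map and then do the amalgamation. The natural candidate is $p \mapsto p\rest J$, where $p\rest J$ keeps only the coordinates in $\dom(p)\cap J$ with the same values and trunk lengths. Since $\supp(p\rest J,n)\subseteq \supp(p,n)$, clauses $(\gamma)$ and $(\delta)$ are inherited, so $p\rest J\in \bbP_J(\bK,\bSig)$. The content is then to show that every $q\geq p\rest J$ in $\bbP_J(\bK,\bSig)$ is compatible with $p$ in $\qprod$. I would build the amalgamation $r$ by setting $\dom(r)=\dom(p)\cup\dom(q)$, taking $r(\alpha,n)=q(\alpha,n)$ for $\alpha\in\dom(q)$ and $r(\alpha,n)=p(\alpha,n)$ for $\alpha\in\dom(p)\setminus J$ (no conflict, since on $\dom(p)\cap J$ we have $q(\alpha,n)\in\bSig(p(\alpha,n))$ because $q\geq p\rest J$). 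By construction $r\geq q$ and $r\geq p$ in the sense of clauses $(\varepsilon),(\zeta)$.

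The technical point is verifying clause $(\gamma)$ for $r$. A priori $|\supp(r,n)|\leq |\supp(p,n)|+|\supp(q,n)|$ can exceed $n$ for small $n$. However both ratios $|\supp(p,n)|/n$ and $|\supp(q,n)|/n$ tend to $0$, so for some threshold $n^*$ the bound holds on $[n^*,\omega)$. For the finitely many offending levels below $n^*$, I would raise the trunk of every coordinate in $\supp(p,n^*)\cup\supp(q,n^*)$ up to $n^*$ (choosing an arbitrary value in $\val(p(\alpha,n))$, resp.\ $\val(q(\alpha,n))$, for the newly-trunkified slots). Only finitely many trunks change, so clause $(\zeta)$ survives with respect to both $p$ and $q$, and clause $(\varepsilon)$ is preserved because $x\in\bSig(\gc)$ means $x\in\val(\gc)$. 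This is the step where I expect to spend most of the care.

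For part (2), my plan is the standard $\Delta$-system argument, leveraging Part (1) for the final amalgamation. Given $\{p_\alpha:\alpha<\omega_2\}\subseteq\qprod$, I would first apply the $\Delta$-system lemma to $\{\dom(p_\alpha):\alpha<\omega_2\}$. Under CH we have $\aleph_2^{\aleph_0}=\aleph_2$, so refining to an $\aleph_2$-subfamily yields a fixed countable root $\Delta\subseteq I$ with $\dom(p_\alpha)\cap \dom(p_\beta)=\Delta$ for $\alpha\neq\beta$. Next I would count the possible restrictions $p_\alpha\rest\Delta$: each is specified by a countable subset of $\Delta$, a trunk-length function into $\omega$, and an $\omega$-sequence of elements of the finite sets $\bH(n)\cup\bK(n)$. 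Under CH each factor has cardinality $\leq\aleph_1$, so there are at most $\aleph_1^{\aleph_0}=\aleph_1$ possible restrictions. By pigeonhole some $\aleph_2$-subfamily shares a common restriction $p^*=p_\alpha\rest\Delta$.

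Finally, any two conditions $p_\alpha,p_\beta$ in the refined family satisfy $p_\alpha\rest\Delta=p_\beta\rest\Delta=p^*$ and $\dom(p_\alpha)\cap\dom(p_\beta)=\Delta$, so $p_\beta\geq p_\alpha\rest\Delta$ (where the restriction is to $J:=\Delta$); the amalgamation from part~(1) then produces a common extension, giving compatibility. The principal obstacle remains the same as in part~(1): the trunk-raising bookkeeping needed to secure clause $(\gamma)$ in the amalgamated condition. Once that is in place, the cardinal arithmetic is routine.
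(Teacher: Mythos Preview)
The paper does not supply its own proof of this proposition; it simply refers the reader to \cite[Lemmas 5.4, 5.5]{KrSh:872}. Your sketch is the standard argument one expects to find there, and it is correct, including the identification of the trunk-raising step as the only nontrivial point in the amalgamation.

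One small imprecision worth tightening: in the final step of part~(2) you write ``$p_\beta\geq p_\alpha\rest\Delta$'' and then invoke ``the amalgamation from part~(1)''. But part~(1) as stated requires the extending condition to lie in $\bbP_\Delta(\bK,\bSig)$, and $p_\beta$ generally does not (its domain is not contained in $\Delta$). What you actually want is to rerun the amalgamation \emph{construction} directly for $p_\alpha$ and $p_\beta$: their domains intersect exactly in $\Delta$, on which they literally agree, so the union is coherent and the same finite trunk-raising fixes clause~$(\gamma)$. This is plainly what you intend, so it is a matter of phrasing rather than a genuine gap.
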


\begin{definition}
[See {\cite[Definition 5.6]{KrSh:872}}] 
\label{deciding}
\begin{enumerate}
\item For a condition $p\in\qprod$ we define\footnote{Remember
  our convention that, for $x\in\bH(i)$, $\val(x)=\{x\}$.}
\[\prodval(p,{<} n)=\prod\limits_{\alpha\in\dom(p)}
  \prod\limits_{m<n}\val(p(\alpha,m)).\] 
\item If $w\subseteq \dom(p)$ and $t\in \prod\limits_{\alpha\in w}
\prod\limits_{m<n}\bH(m)$, then $p\wedge t$ is defined by
\[\trunklg(p\wedge t,\alpha)=
  \begin{cases}
\max(\trunklg(p,\alpha),n)& \text{if }\alpha\in w,\\
\trunklg(p,\alpha) & \text{otherwise}
  \end{cases}\]
and
\[(p\wedge t)(\alpha,m)=
  \begin{cases}
  t(\alpha,m)& \text{if }m<n\text{ and }\alpha\in w,\\
 p(\alpha,m) & \text{otherwise.}
  \end{cases}\]
\item If $\name{\tau}$ is a name of an ordinal, then we say that $p$
  ${<}n$--decides $\name{\tau}$, if for every $t\in \prodval(p,{<} n)$
  the condition $p\wedge t$ forces a value to $\name{\tau}$. The
  condition $p$ essentially decides $\name{\tau}$, if $p$
  ${<}n$-decides $\name{\tau}$ for some $n$.
  \end{enumerate}
\end{definition}

\begin{proposition}
\begin{enumerate}
\item $p\wedge t\in \qprod$, and if $t\in \prodval(p,{<}n)$, then $p\wedge
  t\geq p$.
\item $\prodval(p,{<}n)\leq \prod\limits_{m<n} |\bH(m)|^m$.
\item $\{p\wedge t:\, t\in\prodval(p,{<}n)\}$ is predense above $p$
\end{enumerate}
\end{proposition}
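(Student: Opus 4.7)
My plan is to verify the three clauses directly from Definitions \ref{forcing} and \ref{deciding}, with the bulk of the work concentrated in clause~(3). For clause~(1), I would read off the definition of $p\wedge t$ against the axioms $(\alpha)$--$(\delta)$ of $\qprod$: both $(\alpha)$ and $(\beta)$ are immediate because the entries of $p\wedge t$ either copy those of $p$ or are prescribed values $t(\alpha,m)\in\bH(m)$. Since $\trunklg(p\wedge t,\alpha)\geq \trunklg(p,\alpha)$ for every $\alpha$, we have $\supp(p\wedge t,m)\subseteq \supp(p,m)$; moreover, at each $\alpha\in\supp(p\wedge t,m)$ the entries $(p\wedge t)(\alpha,m)$ and $p(\alpha,m)$ coincide, so $(\gamma)$ and $(\delta)$ transfer from $p$ to $p\wedge t$. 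For the inequality $p\wedge t\geq p$ when $t\in\prodval(p,{<}n)$, clause $(\varepsilon)$ is precisely the demand $t(\alpha,m)\in\val(p(\alpha,m))$ (using the convention $x\in\bSig(\gc)\Leftrightarrow x\in\val(\gc)$), while the exceptional set in $(\zeta)$ is contained in $\supp(p,n)$, hence finite.

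For clause~(2), I would give a factor-by-factor estimate. Whenever $\alpha\in\dom(p)$ satisfies $\trunklg(p,\alpha)>m$, the factor $\val(p(\alpha,m))=\{p(\alpha,m)\}$ is a singleton, so only $\alpha\in\supp(p,m)$ contribute non-trivially, each with at most $|\bH(m)|$ choices; since $|\supp(p,m)|<m$ by~$(\gamma)$, the $m$-th layer has at most $|\bH(m)|^m$ elements, and multiplying over $m<n$ yields the bound.

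Clause~(3) is where the real work lies, and the step I expect to require the most care. Given arbitrary $q\geq p$ in $\qprod$, the plan is to read off a canonical $t\in\prodval(p,{<}n)$ from $q$ and then to exhibit a common extension $r$ of $q$ and $p\wedge t$. For $\alpha\in\dom(p)$ and $m<n$, I would set $t(\alpha,m):=q(\alpha,m)$ whenever $m<\trunklg(q,\alpha)$ (so that $q(\alpha,m)\in\bH(m)$ and, because $q\geq p$, lies in $\val(p(\alpha,m))$), and otherwise I would pick any $t(\alpha,m)\in\val(q(\alpha,m))\subseteq\val(p(\alpha,m))$. I would then define $r$ on $\dom(q)$ by declaring $\trunklg(r,\alpha):=\max(\trunklg(q,\alpha),n)$ for $\alpha\in\dom(p)$ and $\trunklg(r,\alpha):=\trunklg(q,\alpha)$ otherwise, filling in any newly exposed initial segment with $t$ and copying $q$ everywhere else. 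By construction, on the first $n$ levels $r$ agrees with $p\wedge t$; above level $n$ we have $r(\alpha,m)=q(\alpha,m)\in\bSig(p(\alpha,m))$; and trunks only grew on a subset of the finite set $\supp(p,n)$. This delivers both $r\geq q$ and $r\geq p\wedge t$, and $(\gamma)$--$(\delta)$ for $r$ are inherited from $q$ since the modification affects only the finite set $\supp(p,n)$ and only below level~$n$. The one genuine subtlety is that the two prescriptions for $r$ must agree on the overlap $\alpha\in\dom(p)$, $m<n$, $m<\trunklg(q,\alpha)$, which is exactly why $t(\alpha,m)$ was set equal to $q(\alpha,m)$ there.
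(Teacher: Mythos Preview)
Your argument is correct and follows the natural route; the paper itself gives no proof of this proposition, treating it as immediate from Definitions~\ref{forcing} and~\ref{deciding}. There is one small imprecision worth flagging: in clause~(3), when you pass from $q$ to $r$ you say ``trunks only grew on a subset of the finite set $\supp(p,n)$''. What you actually need is that $\{\alpha\in\dom(p):\trunklg(q,\alpha)<n\}$ is finite, and the cleanest way to see this is to observe it is contained in $\supp(q,n)$ (finite by clause~$(\gamma)$ for $q$), rather than in $\supp(p,n)$. Your version does go through once one checks that $\trunklg(q,\alpha)\geq\trunklg(p,\alpha)$ for $\alpha\in\dom(p)$ (which follows from $(\varepsilon)$ and the convention $\bSig(x)=\{x\}$ for $x\in\bH(m)$), but quoting $\supp(q,n)$ avoids this detour. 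The same remark applies to your justification of $(\gamma)$--$(\delta)$ for $r$. Otherwise the verification is complete and the handling of the overlap case $m<\trunklg(q,\alpha)$ in the construction of $t$ is exactly the right observation.
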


\begin{theorem}
[See {\cite[Theorems 5.8, 5.9]{KrSh:872}}] 
\label{thm:prod}
Let $\varphi({<}n)=\prod\limits_{m<n} |\bH(m)|^m$ and
$0<r(n)\leq 1/(n^2\varphi({<}n))$.  Assume that each $\bK(n)$ is 
$(n,r(n))$-decisive and $r(n)$--halving (for $n\in\omega$). 
\begin{enumerate} 
\item The forcing notion $\qprod$ is proper and
  ${}^\omega\omega$-bounding. If $|I|\geq 2$ and $\lambda=
  |I|^{\aleph_0}$, then $\qprod$ forces $|I|\leq
  2^{\aleph_0}\leq\lambda$.  
\item Moreover, if $\name{\tau}(n)$ is a $\qprod$--name for an ordinal
  (for $n<\omega$) and $p\in\qprod$, then there is a condition $q\geq
  p$ which essentially decides all the names $\name{\tau}(n)$. 
\item Assume, additionally, that each $\bK(n)$ is $(g(n),r(n))$-- big,
  where $g\in\baire$ is strictly increasing. Suppose that
  $\name{\nu}(n)$ is a $\qprod$--name and $p\in\qprod$ forces that
  $\name{\nu}(n)<2^{g(n)}$ for all $n<\omega$. Then there is a $q\geq
  p$ which ${<}n$--decides $\name{\nu}(n)$ for all $n$.  
\end{enumerate}
\end{theorem}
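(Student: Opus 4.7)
The plan is to prove part (2) as the technical core — a fusion producing, for any $p\in\qprod$ and countable sequence $\langle\name{\tau}(n):n<\omega\rangle$ of $\qprod$-names for ordinals, a $q\ge p$ essentially deciding each $\name{\tau}(n)$ — and then to derive (1) and (3) with small modifications. Properness and $\baire$-bounding of $\qprod$ fall out of (2) because an essentially deciding $q$ pins each $\name{\tau}(n)$ into the finite set of ordinals forced by the predense family $\{q\wedge t:t\in\prodval(q,{<}n)\}$ from Proposition 1.5(3). The cardinality bound $|I|\le 2^{\aleph_0}\le\lambda$ then follows from Proposition \ref{basprop}(2), a nice-name count using $|\qprod|\le\lambda$, and the obvious generic sequence on each coordinate $\alpha\in I$.

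For (2), I fix a bookkeeping enumeration of $\omega\times\dom(p)$ as $\langle(n_k,\alpha_k):k<\omega\rangle$ in which every pair appears cofinally, and build a fusion $p=p_{-1}\le p_0\le p_1\le\cdots$ of conditions with common domain $\dom(p)$. At stage $k$ I pass from $p_{k-1}$ to $p_k\ge p_{k-1}$ essentially deciding $\name{\tau}(n_k)$ in two substeps. First I \emph{halve}, replacing each relevant creature $p_{k-1}(\alpha,m)$ with $\nor>1$ by $\rhalf(p_{k-1}(\alpha,m))$ to form an auxiliary $p_{k-1}^*$, losing only $r(m)$ in norm. Then, working above $p_{k-1}^*$, I apply Lemma \ref{lemdecisive} level by level: at each relevant level $m$, decisiveness splits the active creatures into a ``small-value'' $\gd^-$-part (with $|\val|$ bounded by some $K$) and a hereditarily $(2^{K^m},r(m))$-big $\gd^+$-part. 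After fixing the finitely many values $t^-$ coming from the $\gd^-$-part, the coloring $t^+\mapsto[\text{value forced to }\name{\tau}(n_k)\text{ by the resulting wedge}]$ has finite range, and the bigness clause of Lemma \ref{lemdecisive} shrinks the $\gd^+$-part to make the coloring constant. Collecting these shrinkings yields some $p_k'\ge p_{k-1}^*$ essentially deciding $\name{\tau}(n_k)$; the halving property then lets me pull back to $p_k\ge p_{k-1}$ with $\nor(p_k(\alpha,m))\ge\nor(p_{k-1}(\alpha,m))-r(m)$ at every relevant coordinate, still essentially deciding $\name{\tau}(n_k)$.

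The main obstacle is the bookkeeping needed to guarantee that the limit $q=\lim_k p_k$ is a legitimate condition, i.e.~satisfies clauses $(\gamma)$ and $(\delta)$ of Definition \ref{forcing}. For $(\delta)$, the cumulative norm loss at each coordinate $(\alpha,m)$ must be absorbed: if the bookkeeping is arranged so that level $m$ is touched only $O(m)$ times before its norm stabilizes, and each touch costs $r(m)$ in norm, then the hypothesis $r(n)=1/(n^2\varphi({<}n))$ is exactly what makes the losses telescope against the norm reserves built up by halving. For $(\gamma)$, the enumeration of $\dom(p)$ must introduce new coordinates into the active support slowly enough that $|\supp(q,n)|<n$ survives in the limit. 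The delicate interaction between the halving reserves and the decisiveness/bigness reductions is the subtle point that every creature-forcing fusion argument hinges on, and it is where the quantitative choices of $\nor$-thresholds and of the enumeration must be matched carefully.

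Part (3) is a refinement of the same fusion. Since $\name{\nu}(n)<2^{g(n)}$ and each $\bK(n)$ is additionally $(g(n),r(n))$-big, at stage $n$ a bigness-style argument via Lemma \ref{lemdecisive} with appropriate parameters, applied to the coloring $t\mapsto[\text{value of }\name{\nu}(n)\text{ forced by }p\wedge t]$ on $\prodval(p,{<}n)$ after a halving step, yields an outright $<n$-decision of $\name{\nu}(n)$ rather than just essential decision. Running this variant in place of the essentially-deciding substeps at the appropriate stages of the fusion produces the $q$ of (3).
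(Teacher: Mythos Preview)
The paper does not prove this theorem at all: it is quoted verbatim from Kellner and Shelah \cite{KrSh:872} (Theorems 5.8 and 5.9 there), as part of the background material in Section~1, with no proof supplied. There is therefore nothing in the present paper to compare your attempt against.

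That said, your sketch is a faithful outline of the fusion argument as it appears in \cite{KrSh:872}: the two-step ``halve, then shrink via decisiveness/bigness, then unhalve'' inner loop, the bookkeeping that bounds total norm loss at level $m$ by $O(m)\cdot r(m)$, and the derivation of (1) and (3) from the machinery of (2) are all the right ideas. One point deserves more care than you give it: you fix $\dom(q)=\dom(p)$ throughout the fusion, but the names $\name{\tau}(n)$ may depend on coordinates outside $\dom(p)$, so the intermediate conditions obtained when forcing a decision can have strictly larger support; the actual construction in \cite{KrSh:872} lets the domain grow along the fusion and interleaves this growth with the enumeration so that clause $(\gamma)$ of Definition~\ref{forcing} survives in the limit. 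Apart from that, your plan matches the source argument.
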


The next theorem is a consequence of (the proof of) \cite[Corollaries
4.8(e), 3.9(b)]{BrFu07}. However, the results in \cite{BrFu07} are stated
for products, while $\qprod$ is not exactly a product (though it does have
all the required features). Therefore we will present the relatively simple
proof of this result fully. 

\begin{theorem}
\label{nochain}
Assume {\rm CH}. Let $r,\varphi,\bK$ and $\bSig$ be as in the assumptions of
Theorem \ref{thm:prod}. Then $\forces_{\qprod}\fdo=\bdo=\aleph_1$.
\end{theorem}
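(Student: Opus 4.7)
The plan is to establish $\forces_{\qprod}\fdo\le\aleph_1$, which combined with the trivial lower bound $\aleph_1\le\bdo\le\fdo$ (witnessed by any strictly $\subseteq$--increasing $\omega_1$--sequence of countable, hence Borel, subsets of $\can$) yields $\bdo=\fdo=\aleph_1$ in the extension.

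First I would observe that every real in $\bV^{\qprod}$ is already added by some complete subforcing $\bbP_J(\bK,\bSig)$ with $J\subseteq I$ countable. Indeed, a name $\name{x}$ for an element of $\can$ is just an $\omega$--sequence of names for ordinals $\name{x}(n)$; by Theorem \ref{thm:prod}(2), any condition $p$ can be extended to $q\ge p$ essentially deciding all the $\name{x}(n)$. The condition $q$ has countable domain $J$, and by Proposition \ref{basprop}(1) the subforcing $\bbP_J(\bK,\bSig)$ is complete in $\qprod$, so below $q$ the name $\name x$ is equivalent to a $\bbP_J(\bK,\bSig)$--name. In particular, the real parameter $\name z$ of any projective relation $\name R$ has a countable support $J_0\subseteq I$.

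Now suppose toward a contradiction that some $p^*\in\qprod$ forces $\name R\cap\name X^2$ to be a well--ordering of $\name X\subseteq\can$ of order--type $\gamma$ with $\cf(\gamma)\ge\aleph_2$. Next I would pick names $\name{x}_\alpha$ ($\alpha<\omega_2$) of a strictly $\name R$--increasing sequence from $\name X$, together with conditions $p_\alpha\ge p^*$ essentially deciding $\name{x}_\alpha$ and with $J_0\subseteq\dom(p_\alpha)=:J_\alpha$. The $\Delta$--system lemma applied to $\{J_\alpha:\alpha<\omega_2\}$ yields $S\subseteq\omega_2$ with $|S|=\aleph_2$ whose $J_\alpha$'s form a $\Delta$--system with root $J^*\supseteq J_0$. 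I would then fix a countable $J^{**}\supseteq J^*$ with $|J^{**}\setminus J^*|=\aleph_0$ and, for each $\alpha\in S$, a bijection $\pi_\alpha:J_\alpha\to J^{**}$ fixing $J^*$ pointwise, and use $\pi_\alpha$ to transport $(p_\alpha,\name{x}_\alpha)$ to a canonical pair $(q_\alpha,\name{y}_\alpha)$ over $\bbP_{J^{**}}(\bK,\bSig)$. Since each essentially decided name is encoded by hereditarily countable data --- the condition $q_\alpha$ together with, for each of the countably many bits, a function $\prodval(q_\alpha,{<}m)\to 2$ --- CH bounds the number of possible canonical pairs by $2^{\aleph_0}=\aleph_1$, and pigeonhole furnishes $\alpha<\beta$ in $S$ with $(q_\alpha,\name{y}_\alpha)=(q_\beta,\name{y}_\beta)$.

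Finally, I would set $\sigma:=\pi_\beta^{-1}\circ\pi_\alpha:J_\alpha\to J_\beta$, a bijection fixing $J^*$, and extend it to an involution $\hat\sigma:I\to I$ swapping $J_\alpha\setminus J^*$ with $J_\beta\setminus J^*$ and acting as the identity elsewhere. Since $\qprod$ treats $I$ as an unstructured set of coordinate labels (the creatures in $\bK(n)$ and the operation $\bSig$ depend only on the level $n$), $\hat\sigma$ lifts to an automorphism $\Pi$ of $\qprod$ satisfying $\Pi(p_\alpha)=p_\beta$, $\Pi(\name{x}_\alpha)=\name{x}_\beta$, and $\Pi\rest\bbP_{J^*}(\bK,\bSig)$ is the identity; in particular $\Pi(\name z)=\name z$ and hence $\Pi(\name R)=\name R$. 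Applying $\Pi$ to the forced statement $\name{x}_\alpha\,\name R\,\name{x}_\beta$ yields the forced statement $\name{x}_\beta\,\name R\,\name{x}_\alpha$, contradicting antisymmetry of a well--ordering. The main obstacle is the canonical--form step: one must verify that essentially decided names of reals really can be normalized --- via the $\pi_\alpha$'s and the data provided by Theorem \ref{thm:prod}(2) --- into an $\aleph_1$--sized collection so that literal equality (not merely forced equality) of names is attained after pigeonholing. This is a careful bookkeeping exercise but is the only non--routine part of the argument.
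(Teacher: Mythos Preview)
Your argument is correct and essentially identical to the paper's: a CH/$\Delta$-system pigeonhole to find $\alpha<\beta$ with isomorphic data, followed by an automorphism of $\qprod$ induced by a permutation of $I$ swapping the two coordinates and fixing the parameter. The only point you pass over that the paper addresses explicitly is that $p_\alpha\cup p_\beta$ need not be a condition (clause~$(\gamma)$ of Definition~\ref{forcing}(2) may fail for small $n$), so one must extend finitely many trunks to obtain a common extension $q\geq p_\alpha,p_\beta$; this can be done symmetrically so that $\Pi(q)=q$, after which the contradiction follows exactly as you describe.
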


\begin{proof}
If $|I|\leq\aleph_1$, then $\forces_{\qprod} {\rm CH}$, so let us assume
$|I|\geq \aleph_2$.  

Every bijection $\pi:I\stackrel{\rm onto}{\longrightarrow} I$ determines an
automorphism $\tilde{\pi}$ of the forcing $\qprod$ in a natural way. Then,
for $J\subseteq I$, $\tilde{\pi}\rest {\mathbb P}_J(\bK,\bSig)$ is an
isomorphism from ${\mathbb P}_J(\bK,\bSig)$ onto ${\mathbb P}_{\pi[J]}
(\bK,\bSig)$. Also, $\pi$ gives rise to a natural bijection from
$\prodval(p,{<}n)$ onto $\prodval(\tilde{\pi}(p),{<}n)$; we will denote this
mapping by $\tilde{\pi}$ as well.

Suppose that $\varphi(x,y,\name{\tau})$ is a projective definition
of a binary relation on $\can$, where $\name{\tau}$ is a $\qprod$--name for
a real parameter. Assume towards contradiction that there are
$\qprod$--names $\name{\eta}_\alpha$ (for $\alpha<\omega_2$) and a condition 
$p\in\qprod$ such that
\begin{enumerate}
\item[(i)] $p\forces_{\qprod}$`` $\big(\forall\alpha,\beta<\omega_2\big)
  \big(\varphi(\name{\eta}_\alpha,\name{\eta}_\beta,\name{\tau})\
  \Leftrightarrow\ \alpha<\beta\big)$ ''. 
\end{enumerate}
For each $\alpha<\omega_2$ choose a condition $p_\alpha\geq p$ which
essentially decides all $\name{\eta}_\alpha(n)$ (for $n<\omega$). Then we
may also pick an increasing sequence $\bar{N}^\alpha= \langle N^\alpha_n:
n<\omega\rangle\subseteq \omega$ and a mapping
$f_\alpha:\bigcup\limits_{n<\omega}\prodval(p_\alpha,{<}N^\alpha_n)
\longrightarrow 2$ such that for each $t\in\prodval(p_\alpha,{<}N^\alpha_n)$
we have $(p_\alpha\wedge t)\forces\name{\eta}_\alpha(n)=f_\alpha(t)$. 

By CH, we may use a standard $\Delta$--system argument and the fact that
$\qprod$ satisfies the $\aleph_2$--cc (see \ref{basprop})  to choose $J\in
[I]^{\textstyle \aleph_1}$, $X\in [\omega_2]^{\textstyle \aleph_2}$ and
bijections $\pi_{\alpha,\beta}: \dom(p_\alpha)\stackrel{\rm
  onto}{\longrightarrow} \dom(p_\beta)$ such that   
\begin{enumerate}
\item[(ii)] $\dom(p)\subseteq J$ and $\name{\tau}$ is a 
${\mathbb P}_J(\bK,\bSig)$--name, and for distinct 
$\alpha,\beta\in X$:
\item[(iii)] $\dom(p_\alpha)\cap\dom(q_\beta)=\dom(p_\alpha)\cap J$ and
  $\pi_{\alpha,\beta}\rest (\dom(p_\alpha)\cap J)$ is the identity, 
\item[(iv)] $\tilde{\pi}_{\alpha,\beta}(p_\alpha)=p_\beta$,
  $\bar{N}^\alpha= \bar{N}^\beta$, and $f_\alpha=f_\beta\circ
  \tilde{\pi}_{\alpha,\beta}$. 
\end{enumerate}
Pick $\alpha<\beta$ from $X$. Let $\pi$ be a bijection from $I$ onto $I$
such that $\pi_{\alpha,\beta}\subseteq\pi$, $(\pi_{\alpha,\beta})^{-1}
\subseteq \pi$ and $\pi\rest J$ is the identity. Then 
\begin{enumerate}
\item[(v)] $\tilde{\pi}(p_\alpha)=p_\beta$, $\tilde{\pi}(p_\beta)
  =p_\alpha$ and $\tilde{\pi}(\name{\tau})=\name{\tau}$.  
\end{enumerate}
Note that $p_\alpha\cup p_\beta$ does not have to be a condition in $\qprod$
as the demand \ref{forcing}(2)($\gamma$) may fail. But extending finitely
many trunks will easily resolve this problem and we get a condition $q$
stronger than both $p_\alpha$ and $p_\beta$.  We may even do this in such a 
manner that the condition $q$ satisfies $\tilde{\pi}(q)=q$. Since $q\geq
p_\alpha,p_\beta$,  clause (iv) implies  
\begin{enumerate}
\item[(vi)] $q\forces$`` $\tilde{\pi}(\name{\eta}_\alpha)=
\name{\eta}_\beta\ \&\ \tilde{\pi}(\name{\eta}_\beta)=\name{\eta}_\alpha$
''. 
\end{enumerate}
Since $q\geq p$ and $\alpha<\beta$ we have $q\forces
\varphi(\name{\eta}_\alpha,\name{\eta}_\beta,\name{\tau})$. Applying the
automorphism $\tilde{\pi}$ and remembering (vi) we conclude that then also
$\tilde{\pi}(q)=q\forces \varphi(\name{\eta}_\beta,\name{\eta}_\alpha,
\name{\tau})$, contradicting (i).  
\end{proof}

\section{Consistency of $\fdo<{\rm non}(\cM\cap\cN)$}

\begin{definition}
\label{prenorm}
Let $n<\omega$.
\begin{enumerate}
\item A {\em basic $n$--block\/} is a finite non-empty set $B$ of functions
  from some non-empty $v\in\fsuo$ to 2 (i.e., $B\subseteq {}^{\textstyle v}
  2$) such that $|B|/2^{|v|}<2^{-n}$. If $\eta\in\fs\cup\can$ and
  $B\subseteq {}^{\textstyle v} 2$ is a basic block, then we write
  $\eta\prec B$ whenever $\eta\rest v\in B$. For an $n$--block $B\subseteq
  {}^{\textstyle v} 2$ we set $v(B)=v$.   
\item Let $H_n$ be the family of all pairs $(b,\cB)$ such that $b$ is a
  positive integer and $\cB$ is a non-empty finite set of basic 
  $n$--blocks. 
\item We define a function $\ppn:H_n\longrightarrow\omega$ by declaring 
  inductively when $\ppn(b,\cB)\geq k$. We set $\ppn(b,\cB)\geq 0$
  always, and then  
\begin{itemize}
\item $\ppn(b,\cB)\geq 1$ if and only if $(\forall F\in [\can]^{\textstyle
    b})(\exists B\in \cB)(\forall \eta\in F)(\eta\prec B)$,
\item $\ppn(b,\cB)\geq k+1$ if and only if there are positive integers 
  $b_0,\ldots,b_{M-1}$ and disjoint sets $\cB_0,\ldots,\cB_{M-1}
  \subseteq\cB$ such that  
\begin{enumerate}
\item[$(\alpha)$] $M>b^{k+1}$, $b_0\geq b$ and 
\item[$(\beta)$] $\ppn(b_i,\cB_i)\geq k$ and $(b_i)^2\cdot
  2^{|\cB_i|^n}<b_{i+1}$ for all $i<M$.  
\end{enumerate}
\end{itemize}
\end{enumerate}
\end{definition}

\begin{proposition}
\label{basicprop}
Let $n<\omega$, $(b,\cB),(b',\cB')\in H_n$.
\begin{enumerate}
\item $\ppn(b,\cB)\in \omega$ is well defined and $2^{\ppn(b,\cB)}\leq
  |\cB|$.  
\item If $\cB\subseteq \cB'$ and $b'\leq b$, then $\ppn(b,\cB) \leq
  \ppn(b',\cB')$.  
\item For each $N$ there is $(b^*,\cB^*)\in H_n$ such that 
\[b^*\geq N\mbox{ and }\ppn(b^*,\cB^*)\geq N\mbox{ and
}\min(v(B))>N\mbox{ for all }B\in\cB^*.\] 
\item If $\ppn(b,\cB)\geq k+1\geq 2$ and $c:\cB\longrightarrow \{0,\ldots,
  b-1\}$, then for some $\ell<b$ we have $\ppn(b,c^{-1}[\{\ell\}])\geq
  k$. 
\end{enumerate}
\end{proposition}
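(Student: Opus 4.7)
My plan is to take the four parts in sequence: (1) and (4) by induction on $k$; (2) directly; (3) by induction on $N$ with a slight strengthening of the hypothesis. The substantive content sits in the base case of (4), which I expect to be the main obstacle.

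For (1) I induct on $k$ to show $\ppn(b,\cB)\geq k$ forces $|\cB|\geq 2^k$: the case $k=1$ is that a singleton $\cB=\{B\}$ cannot witness $\ppn\geq 1$ since this would demand $\eta\prec B$ for every $\eta\in\can$, violating $|B|/2^{|v(B)|}<2^{-n}$; the step uses that the $M>b^{k+1}\geq 1$ sets $\cB_i$ are disjoint with $|\cB_i|\geq 2^k$ by IH, giving $|\cB|\geq M\cdot 2^k\geq 2^{k+1}$. The same bound shows $\ppn$ is a well-defined natural number. Part (2) needs no induction: for $\ppn\geq 1$, any $F\in[\can]^{b'}$ extends to $F^+\in[\can]^b$, and a $B\in\cB\subseteq\cB'$ covering $F^+$ also covers $F$; for $\ppn\geq k+1$ with $k\geq 1$ the same witnesses for $(b,\cB)$ certify $(b',\cB')$, since $b_0\geq b\geq b'$, $M>b^{k+1}\geq (b')^{k+1}$, $\cB_i\subseteq\cB\subseteq\cB'$, and clauses $(\alpha)$--$(\beta)$ are intrinsic to the witnesses.

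For (3) I plan to prove the stronger statement that for each $N,L<\omega$ there is $(b^*,\cB^*)\in H_n$ with $b^*\geq L$, $\ppn(b^*,\cB^*)\geq N$ and $\min(v(B))>L$ for every $B\in\cB^*$, by induction on $N$. The base $N=0$ is a single basic block over a late, wide domain. For the step at $N+1$, fix $L$, take $M>L^{N+1}$, and iteratively apply the IH to produce $(b_i,\cB_i)_{i<M}$ in which each threshold $L_i$ dominates both $(b_{i-1})^2\cdot 2^{|\cB_{i-1}|^n}$ and every index appearing in the supports of the previously built $\cB_j$. Since basic blocks over different supports are automatically distinct, the $\cB_i$ are pairwise disjoint, each $B\in\cB_i$ has $\min(v(B))>L$, and $(b_i,\cB_i)_{i<M}$ witnesses $\ppn(L,\bigcup_i\cB_i)\geq N+1$ via clauses $(\alpha)$--$(\beta)$ of Definition \ref{prenorm}.

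Part (4) is the heart of the matter, by induction on $k$; let $(b_i,\cB_i)_{i<M}$ witness $\ppn(b,\cB)\geq k+1$. For the base $k=1$ I would assume toward contradiction that each $\ell<b$ admits $F_\ell\in[\can]^b$ uncovered by any $B\in c^{-1}[\{\ell\}]$, then take $F^*=\bigcup_\ell F_\ell$, of size $\leq b^2$. The growth condition $(b_0)^2\cdot 2^{|\cB_0|^n}<b_1$ together with $b_0\geq b$ gives $b_1>b^2\geq|F^*|$, so $F^*$ sits inside some $G\in[\can]^{b_1}$; some $B\in\cB_1$ covers $G$ by $\ppn(b_1,\cB_1)\geq 1$, hence $B$ covers $F_{c(B)}$, contradicting the choice of $F_{c(B)}$. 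For the step $k\geq 2$, $c\rest\cB_i$ lands in $\{0,\ldots,b-1\}\subseteq\{0,\ldots,b_i-1\}$, so the IH applied to $\ppn(b_i,\cB_i)\geq k$ yields $\ell_i<b_i$ with $\ppn(b_i,\cB_i\cap c^{-1}[\{\ell_i\}])\geq k-1$; since $k-1\geq 1$ while $\ppn(b_i,\emptyset)=0$, the intersection is non-empty, forcing $\ell_i<b$. Pigeonhole on $M>b^{k+1}$ indices in $b$ colors produces $\ell^*<b$ appearing more than $b^k$ times, and the corresponding sub-tuples $(b_{i_j},\cB_{i_j}\cap c^{-1}[\{\ell^*\}])$ witness $\ppn(b,c^{-1}[\{\ell^*\}])\geq k$; clause $(\beta)$ survives because $\langle b_i\rangle$ is strictly increasing and $|\cB_{i_j}\cap c^{-1}[\{\ell^*\}]|\leq|\cB_{i_j}|$. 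The main obstacle is the base case $k=1$: the coloring cannot be dismantled block-by-block, and one must invoke the precise growth rate in $(\beta)$ to fit the combined counterexample into a set where the covering property of some $\cB_i$ is available.
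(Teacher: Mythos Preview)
Your proof is correct and follows essentially the same approach as the paper's own proof. The paper is much terser (parts (1)--(2) are dismissed as ``easy induction'', part (3) is a one-line hint, and only (4) is written out), but your arguments for (4)---the contradiction via $F^*=\bigcup_\ell F_\ell$ in the base case and the pigeonhole-on-colors in the step---match the paper exactly; you even fill in the justification for $\ell_i<b$ (via non-emptiness of the preimage) that the paper leaves implicit.
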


\begin{proof}
(1,2)\quad Easy induction on $\ppn(b,\cB)$.
\medskip

\noindent (3)\quad Note that if $w\in \fsuo$, $2^n\cdot N< 2^{|w|}$ and
$\cB_w$ consists of all basic $n$--blocks $B$ with $v(B)=w$, then
$\ppn(N,\cB_w)\geq 1$. Now proceed inductively.  
\medskip

\noindent (4)\quad Induction on $k\geq 1$. Assume $\ppn(b,\cB)\geq 2$
and $c:\cB\longrightarrow b$. We claim that for some $\ell<b$ we have
$\ppn(b,c^{-1}[\{\ell\}])\geq 1$. If not, then for each $\ell<b$ we
may choose $F_\ell\in [\can]^{\textstyle b}$ such that
\[\big(\forall B\in\cB\big)\big(\exists\eta\in
F_\ell\big)\big(c(B)=\ell\ \Rightarrow\ \eta\nprec B\big).\] 
Set $F=\bigcup\limits_{\ell<b} F_\ell$. Let $b_0,\ldots,b_{M-1},\cB_0, 
\ldots, \cB_{M-1}$ witness $\ppn(b,\cB)\geq 2$, in particular,
$b_1>b^2$ and $\ppn(b_1,\cB_1)\geq 1$. Since $|F|\leq b^2$ we conclude 
that there is $B\in\cB_1$ such that $(\forall\eta\in F)(\eta\prec
B)$. Then $B$ contradicts the choice of $F_{c(B)}$.

\noindent Now, for the inductive step, assume our statement holds
for $k$. Let $\ppn(b,\cB)\geq k+2$ and $c:\cB\longrightarrow
\{0,\ldots, b-1\}$. Let $\{(b_i,\cB_i):i<M\}$ witness $\ppn(b,\cB)\geq
(k+1)+1$, so $M>b^{k+2}$ and $\ppn(b_i,\cB_i) \geq k+1$ and $b_i\geq
b$. For each $i<M$ apply the inductive hypothesis to choose $\ell_i<b$
such that $\ppn(b_i,\cB_i\cap c^{-1}[\{ \ell_i\}])\geq k$. Choose $\ell^*<b$
such that $|\{i<M:\ell^*=\ell_i\}|> b^{k+1}$. Then $\{(b_i,\cB_i \cap
c^{-1}[\{ \ell_i\}]):\ell_i=\ell^*\}$ witnesses that
$\ppn(b,c^{-1}[\{\ell^*\}])\geq k+1$.   
\end{proof}

Now, by induction on $n<\omega$ we define the following objects
\begin{enumerate}
\item[$(\oplus)_n$] $\varphi_{\bH^*}({<}n), r_{\bH^*}(n), a(n), N_n, g(n),
  \bH^*(n), \bK^*(n),\bSig^*\rest \bK^*(n),\varphi_{\bH^*}(=n)$.  
\end{enumerate}
We start with stipulating $N_0=0$, $\varphi_{\bH^*}(<0)=1$.

\noindent Assume we have defined objects listed in $(\oplus)_k$ for
$k<n$, and that we also have defined integers $N_n,
\varphi_{\bH^*}({<}n)$. We set   
\begin{enumerate}
\item[(i)] $g(n)=2^{N_n}+\varphi_{\bH^*}({<}n)$, $r_{\bH^*}(n)=
  \frac{1}{(n+2)^2\varphi_{\bH^*}({<}n)}$ and $a(n)=2^{1/r_{\bH^*}(n)}$. 
\end{enumerate}
Choose $(b^*,\cB^*)\in H_n$ such that 
\begin{enumerate}
\item[(ii)] $b^*>g(n)$, $\min(v(B))>N_n$ for all
  $B\in\cB^*$ and $\ppn(b^*,\cB^*)>a(n)^{n+972}$
\end{enumerate}
(possible by \ref{basicprop}(3)). Set 
\begin{enumerate}
\item[(iii)] $N_{n+1}=\max\big(\bigcup\{v(B):B\in\cB^*\}\big)+1$.
\end{enumerate}
We let $\bH^*(n)$ be the set of all basic $n$--blocks $B$ such
that $v(B)\subseteq [N_n,N_{n+1})$, and $\bK^*(n)$ consist of all triples 
$\gc=(k^\gc,b^\gc,\cB^\gc)$ such that 
\[(b^\gc,\cB^\gc)\in H_n,\ \ \cB^\gc\subseteq \bH^*(n),\ \ b^\gc>g(n),\
\mbox{ and }\ k^\gc\in \omega,\ \ k^\gc<\ppn(b^\gc,\cB^\gc)-1.\] 
For $\gc\in\bK^*(n)$ we set
\begin{enumerate}
\item[(iv)]  $\nor(\gc)=\log_{a(n)}\big(\ppn(b^\gc,
\cB^\gc)-k^\gc\big)$, $\val(\gc)=\cB^\gc$ and\\ 
$\bSig^*(\gc)=\{\gd\in\bK^*(n): k^\gc\leq k^\gd,\ b^\gc\leq b^\gd,\ 
\cB^\gd\subseteq \cB^\gc\}$. 
\end{enumerate}
Finally, we put $\varphi_{\bH^*}({=}n)=|\bH^*(n)|^n$ and
$\varphi_{\bH^*}({<}n+1)=\varphi_{\bH^*}({<}n)\cdot
\varphi_{\bH^*}({=}n)$. This completes our inductive definition.

\begin{proposition}
\label{pairOK}
$(\bK^*,\bSig^*)$ is a creating pair for $\bH^*$ such that, for each 
$n<\omega$, $\bK^*(n)$ is $(n,r_{\bH^*}(n))$--decisive,
$r_{\bH^*}(n)$--halving and $(g(n),r_{\bH^*}(n))$--big. 
\end{proposition}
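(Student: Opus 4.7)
I would verify the four requirements in turn: (a) the creating-pair axioms from Definition \ref{defbig}; (b) $(g(n),r_{\bH^*}(n))$-bigness; (c) $r_{\bH^*}(n)$-halving; (d) $(n,r_{\bH^*}(n))$-decisiveness. The creating-pair axioms are essentially bookkeeping: reflexivity $\gc\in\bSig^*(\gc)$ and transitivity of $\bSig^*$ are built into the definition, norm/value monotonicity is Proposition \ref{basicprop}(2), and finiteness of $\bK^*(n)$ uses that $\cB^\gc\subseteq \bH^*(n)$ together with the fact that $\ppn(b^\gc,\cB^\gc)\geq 1$ forces an upper bound on $b^\gc$. The substance lies in (b), (c), (d), and throughout the argument I will use that $a(n)^{r_{\bH^*}(n)}=2$ and $\nor(\gc)>1$ implies $P-k^\gc>a(n)\geq 2$, where $P:=\ppn(b^\gc,\cB^\gc)$.

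Bigness and halving are relatively mechanical. For $(g(n),r_{\bH^*}(n))$-bigness, given $\gc$ with $\nor(\gc)>1$ and a colouring $F:\cB^\gc\to g(n)\subseteq b^\gc$, I would feed $F$ directly into Proposition \ref{basicprop}(4) to obtain $\ell^*<b^\gc$ with $\ppn(b^\gc,F^{-1}[\{\ell^*\}])\geq P-1$, and set $\gd=(k^\gc,b^\gc,F^{-1}[\{\ell^*\}])\in\bSig^*(\gc)$; the norm drop is bounded by $\log_{a(n)}\!\bigl((P-k^\gc)/(P-k^\gc-1)\bigr)\leq \log_{a(n)} 2=r_{\bH^*}(n)$. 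For $r_{\bH^*}(n)$-halving I would set $\rhalf(\gc)=(k^\gc+\lfloor (P-k^\gc)/2\rfloor,\,b^\gc,\,\cB^\gc)$; the choice of shift keeps $\nor(\rhalf(\gc))$ within $r_{\bH^*}(n)$ of $\nor(\gc)$, and for any $\gd\in\bSig^*(\rhalf(\gc))$ with $\nor(\gd)>0$, the reset $\gd'=(k^\gc,b^\gd,\cB^\gd)\in\bSig^*(\gc)$ preserves $\val$, while combining $\ppn(b^\gd,\cB^\gd)-k^\gd\geq 2$ with $k^\gd-k^\gc\geq\lfloor (P-k^\gc)/2\rfloor$ yields $\ppn(b^\gd,\cB^\gd)-k^\gc\geq (P-k^\gc)/2$, and therefore $\nor(\gd')\geq\nor(\gc)-r_{\bH^*}(n)$.

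For $(n,r_{\bH^*}(n))$-decisiveness --- the one genuinely structural step --- I would unfold ``$\ppn(b^\gc,\cB^\gc)\geq P$'' once to obtain the decomposition $(b_i,\cB_i)_{i<M}$ from Definition \ref{prenorm}(2), with $\ppn(b_i,\cB_i)\geq P-1$, disjoint $\cB_i\subseteq\cB^\gc$, $b_0\geq b^\gc$, and crucially $b_1>b_0^2\cdot 2^{|\cB_0|^n}$. Setting $K=|\cB_0|$, $\gd^-=(k^\gc,b_0,\cB_0)$ and $\gd^+=(k^\gc,b_1,\cB_1)$, both lie in $\bSig^*(\gc)$ with $\nor\geq\log_{a(n)}(P-1-k^\gc)\geq\nor(\gc)-r_{\bH^*}(n)$ (same estimate as in the bigness step), and trivially $|\val(\gd^-)|=K$. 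Hereditary $(2^{K^n},r_{\bH^*}(n))$-bigness of $\gd^+$ then follows because every $\gd\in\bSig^*(\gd^+)$ with $\nor(\gd)>1$ has $b^\gd\geq b_1>2^{K^n}$, so one more application of Proposition \ref{basicprop}(4) (now with range $2^{K^n}\leq b^\gd$) yields the required constant-colour refinement of $\gd$ with norm loss at most $r_{\bH^*}(n)$, exactly as in the bigness step.

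The main obstacle is spotting the structural match in decisiveness: the exponent $n$ appearing in the inner parameter condition $b_i^2\cdot 2^{|\cB_i|^n}<b_{i+1}$ of Definition \ref{prenorm}(2)($\beta$) was evidently placed there so that, whatever $K=|\cB_0|$ happens to be, the next $b_1$ automatically exceeds $2^{K^n}$. Once this is noticed, a single unfolding of $\ppn\geq P$ simultaneously delivers both $\gd^-$ (the small-$\val$ companion, coming from the first piece $\cB_0$) and $\gd^+$ (a second piece $\cB_1$ together with a $b$ large enough to support hereditary $(2^{K^n},r_{\bH^*}(n))$-bigness throughout the cone), and every remaining step collapses to the same invocation of Proposition \ref{basicprop}(4) used for bigness.
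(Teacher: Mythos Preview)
Your proposal is correct and follows essentially the same approach as the paper's proof: halving via the shift $\rhalf(\gc)=(k^\gc+\lfloor(P-k^\gc)/2\rfloor,b^\gc,\cB^\gc)$ with recovery $\gd'=(k^\gc,b^\gd,\cB^\gd)$; bigness via Proposition~\ref{basicprop}(4) applied to a colouring with range $\leq b^\gc$; and decisiveness via a single unfolding of the recursion for $\ppn$, taking $\gd^-=(k^\gc,b_0,\cB_0)$, $\gd^+=(k^\gc,b_1,\cB_1)$, $K=|\cB_0|$, with hereditary $(2^{K^n},r_{\bH^*}(n))$-bigness of $\gd^+$ following from $b^\gd\geq b_1>2^{K^n}$ and the bigness argument. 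The paper packages the bigness step as the auxiliary observation $(*)$ (``$\nor(\gc)>r_{\bH^*}(n)$ implies $\gc$ is $(b^\gc,r_{\bH^*}(n))$-big'') and then reuses it for hereditary bigness, exactly as you do.
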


\begin{proof}
To verify halving, for each $\gc\in \bK^*(n)$ with $\nor(\gc)>1$ set
\[\rhalf(\gc)=(k^\gc+\lfloor \frac{1}{2}(\ppn(b^\gc,\cB^\gc)-k^\gc)\rfloor,
b^\gc,\cB^\gc).\]
Note that $\nor(\gc)>1$ implies $\ppn(b^\gc,\cB^\gc)-k^\gc>2$ and hence
\[k^\gc+ \lfloor \frac{1}{2}(\ppn(b^\gc,\cB^\gc)-k^\gc)\rfloor <\ppn(b^\gc,
\cB^\gc)-1.\]
Therefore, $\rhalf(\gc)\in\bSig^*(\gc)$ and $\nor(\rhalf(\gc)) \geq
\nor(\gc)-r_{\bH^*}(n)$. Now suppose $\gd\in\bSig^*(\rhalf(\gc))$, so
$k^\gc+\lfloor \frac{1}{2}(\ppn(b^\gc,\cB^\gc)-k^\gc)\rfloor\leq k^\gd$,
$b^\gc\leq b^\gd$ and $\cB^\gd\subseteq \cB^\gc$. Also, $k^\gd<
\ppn(b^\gd,\cB^\gd)-1$, so $\ppn(b^\gd,\cB^\gd)> k^\gc+ \lfloor
\frac{1}{2}(\ppn(b^\gc,\cB^\gc)-k^\gc)\rfloor+1$. Consider
$\gd'=(k^\gc,b^\gd,\cB^\gd)$. Plainly $\gd'\in\bSig^*(\gc)$, $\val(\gd')
\subseteq \val(\gd)$ and
\[\begin{array}{r}
\nor(\gd')\geq \log_{a(n)} \big(\lfloor \frac{1}{2}(\ppn(b^\gc, \cB^\gc)-
k^\gc) \rfloor+1\big)\geq \log_{a(n)}\big(\frac{1}{2}(\ppn(b^\gc, \cB^\gc)-
k^\gc)\big)\\
=\nor(\gc)-r_{\bH^*}(n).
\end{array}\]

It follows from \ref{basicprop}(4) that 
\begin{enumerate}
\item[$(*)$] if $\gc\in\bK^*(n)$, $\nor(\gc)>r_{\bH^*}(n)$, then $\gc$ is
  $(b^\gc,r_{\bH^*}(n))$--big. 
\end{enumerate}
Hence $\bK^*(n)$ is $(g(n),r_{\bH^*}(n))$--big (remember the definition of
$\bK^*(n)$).  

Now suppose $\gc\in\bK^*(n)$, $\nor(\gc)>1$. Then $\ppn(b^\gc,\cB^\gc)-
k^\gc>2$, so by the definition of $\ppn$ (see \ref{prenorm}(3)) we may find
$b^\gc\leq b_0<b_1<\ldots<b_{M-1}$ and disjoint $\cB_0,\ldots,\cB_{M-1}
\subseteq \cB^\gc$ such that $\ppn(b_i,\cB_i)\geq \ppn(b^\gc,\cB^\gc)-1$ and
$(b_i)^2\cdot 2^{|\cB_i|^n}<b_{i+1}$. Set 
\[\gd^-=(k^\gc,b_0,\cB_0),\quad \gd^+=(k^\gc,b_1,\cB_1),\quad \mbox{ and }
K=|\cB_0|.\]
Plainly, $\gd^-,\gd^+\in\bSig(\gc)$, $\min\{\nor(\gd^-),\nor(\gd^+)\} \geq
\nor(\gc)-r_{\bH^*}(n)>r_{\bH^*}(n)$ and $|\val(\gd^-)|=K$. Also $\gd^+$ is
hereditarily  $(2^{K^n},r_{\bH^*}(n))$--big (remember $b_1>2^{K^n}$, use
$(*)$). Thus $\gd^-,\gd^+$ witness that $\gc$ is
$(K,n,r_{\bH^*}(n))$--decisive.  
\end{proof}

\begin{definition}
\begin{enumerate}
\item For a cardinal $\lambda$ we consider the forcing notion $\qfl$
determined by the creating pair $(\bK^*,\bSig^*)$ as in
\ref{forcing}(2). For $\alpha<\lambda$, a $\qfl$--name $\name{\rho}_\alpha$
is defined by 
\[\forces_{\qfl}\name{\rho}_\alpha=\bigcup\big\{p(\alpha,n):
\alpha\in\dom(p)\ \&\ n<\trunklg(p,\alpha)\ \&\ p\in\name{G}_{\qfl}\big\}.\]   
\item For $\rho\in\prod\limits_{n<\omega}\bH^*(n)$ we set $F(\rho)=\big\{
  \eta\in\can:(\forall^\infty n<\omega)(\eta\prec\rho(n)\big\}$.
\end{enumerate}
\end{definition}

Plainly, for each $\alpha<\lambda$, $\forces_{\qfl}\name{\rho}_\alpha \in
\prod\limits_{n<\omega}\bH^*(n)$. Also, for $\rho\in \prod\limits_{n<\omega}
\bH^*(n)$, the set $F(\rho)$ is a meager and null $\bSig^0_2$--subset of
$\can$. 

\begin{theorem}
\label{forcingOK}
Assume CH. Let $\lambda$ be an uncountable cardinal,
$\lambda=\lambda^{\aleph_0}$.
\begin{enumerate}
\item Forcing with $\qfl$ preserves cardinalities and cofinalities and\\
  $\forces_{\qfl}$`` $2^{\aleph_0}=\lambda$ ''.
\item If $\beta<\lambda$ and $\name{\nu}$ is a ${\mathbb
    P}_{\lambda\setminus\{\beta\}}(\bK^*,\bSig^*)$--name for a
member of $\can$, then $\forces_{\qfl}$`` $\name{\nu}\in
F(\name{\rho}_\beta)$ ''.
\item Consequently, $\forces_{\qfl}$`` ${\rm non}({\mathcal N})= {\rm 
    non}({\mathcal M})=\lambda$ ''.
\end{enumerate}
\end{theorem}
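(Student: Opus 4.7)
For Part (1), Proposition~\ref{pairOK} verifies the hypotheses of Theorem~\ref{thm:prod}, so $\qfl$ is proper and ${}^\omega\omega$-bounding; applied with $|I|=\lambda=\lambda^{\aleph_0}$, Theorem~\ref{thm:prod}(1) forces $2^{\aleph_0}=\lambda$. Under CH, Proposition~\ref{basprop}(2) gives the $\aleph_2$-cc. Properness preserves $\aleph_1$ and the $\aleph_2$-cc preserves cardinals and cofinalities $\geq\aleph_2$, so cardinals and cofinalities are preserved.

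Part (2) is the heart of the argument. Given $p\in\qfl$ with $\beta\in\dom(p)$, set $p^-=p\rest(\lambda\setminus\{\beta\})$. Since $\name{\nu}\rest N_m$ is a $\bbP_{\lambda\setminus\{\beta\}}(\bK^*,\bSig^*)$-name for an ordinal ${<}2^{N_m}\leq 2^{g(m)}$, Theorem~\ref{thm:prod}(3) yields $q^-\geq p^-$ that ${<}m$-decides $\name{\nu}\rest N_m$ for every $m$; let $\sigma_m(t)\in{}^{N_m}2$ denote the value forced by $q^-\wedge t$. The plan is to build $q$ agreeing with $q^-$ off~$\beta$ and with a thinned creature sequence $q(\beta,n)\in \bSig^*(p(\beta,n))$ at~$\beta$, arranging that every block $B\in \cB^{q(\beta,n)}$ satisfies $\sigma_{n+1}(t)\rest v(B)\in B$ for all $t\in\prodval(q^-,{<}n+1)$ (for $n$ past some threshold). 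By the definition of $\name{\rho}_\beta$, $q$ then forces $\name{\nu}\prec\name{\rho}_\beta(n)$ eventually, hence $\name{\nu}\in F(\name{\rho}_\beta)$.

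For the thinning, call $B\in\bH^*(n)$ \emph{bad for $t$} if $\sigma_{n+1}(t)\rest v(B)\notin B$. Applying the definition of $\ppn\geq 1$ to $F=\{\eta^*\}$ for any $\eta^*\in\can$ extending $\sigma_{n+1}(t)\rest[N_n,N_{n+1})$ immediately forces $\ppn(1,\mathrm{Bad}_t)=0$, hence by monotonicity (Proposition~\ref{basicprop}(2)) $\ppn(b,\mathrm{Bad}_t\cap\cB)=0$ for all $b,\cB$. Iterating Proposition~\ref{basicprop}(4) with the two-colouring good-vs.-bad-for-$t$, over all $t\in\prodval(q^-,{<}n+1)$, thins $\cB^{p(\beta,n)}$ to some $\cB'$ with $\ppn(b^{p(\beta,n)},\cB')\geq \ppn(b^{p(\beta,n)},\cB^{p(\beta,n)})-\varphi_{\bH^*}({<}n+1)$. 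The calibration $\ppn(b^*,\cB^*)>a(n)^{n+972}$ from the inductive construction of $(\bK^*,\bSig^*)$ ensures that the $\ppn$-surplus at $p(\beta,n)$ dwarfs $\varphi_{\bH^*}({<}n+1)$ for all sufficiently large~$n$, translating to a norm loss of at most $\log_{a(n)}2=r_{\bH^*}(n)\to 0$. Hence $\nor(q(\beta,n))\to\infty$ and $q\in\qfl$ is legitimate.

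Part~(3) follows quickly from Part~(2) and the $\aleph_2$-cc: any $X\subseteq\can$ in the extension of size $<\lambda$ has its members named inside some $\bbP_J(\bK^*,\bSig^*)$-sub-extension with $|J|<\lambda$. Choosing any $\beta\in\lambda\setminus J$, each $x\in X$ is a $\bbP_{\lambda\setminus\{\beta\}}(\bK^*,\bSig^*)$-name, so by Part~(2), $x\in F(\name{\rho}_\beta)\in\cN\cap\cM$. Hence $\mathrm{non}(\cN),\mathrm{non}(\cM)\geq\lambda$, with equality by Part~(1). The main obstacle lies in the norm bookkeeping in Part~(2): one must verify that the calibrated $\ppn$-growth really outpaces $\varphi_{\bH^*}({<}n+1)$ at each level, so that the cumulative thinning preserves $\nor(q(\beta,n))\to\infty$—this is the sole quantitative purpose of the exponent ``$n+972$'' in the inductive construction of $(\bK^*,\bSig^*)$.
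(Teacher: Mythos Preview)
Your treatment of Parts~(1) and~(3) is correct and matches the paper. The gap is in Part~(2), and it is a genuine quantitative failure, not a bookkeeping oversight.

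You propose to keep $q^-$ fixed off $\beta$ and thin only $\cB^{p(\beta,n)}$, iterating Proposition~\ref{basicprop}(4) once for each $t\in\prodval(q^-,{<}n{+}1)$. The number of such $t$ is bounded by $\varphi_{\bH^*}({<}n{+}1)=\varphi_{\bH^*}({<}n)\cdot|\bH^*(n)|^n$, and your claim is that the calibration $\ppn(b^*,\cB^*)>a(n)^{n+972}$ makes the $\ppn$--surplus at $p(\beta,n)$ dominate this. It does not. First, $p(\beta,n)$ is an arbitrary creature, so its surplus is only $a(n)^{\nor(p(\beta,n))}$, not $a(n)^{n+972}$. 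Second, and fatally, even $a(n)^{n+972}$ is dwarfed by $\varphi_{\bH^*}({<}n{+}1)$: since $\cB^*\subseteq\bH^*(n)$ and $|\cB^*|\geq 2^{\ppn(b^*,\cB^*)}>2^{a(n)^{n+972}}$ (Proposition~\ref{basicprop}(1)), we get $|\bH^*(n)|^n>2^{\,n\cdot a(n)^{n+972}}$, which is doubly exponential beyond $a(n)^{n+972}$. So the iteration you describe would drive $\ppn$ far below zero; no creature $q(\beta,n)$ survives.

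The paper avoids this by \emph{also} thinning the creatures $q(\alpha,n)$ for $\alpha\in\supp(q,n)\setminus\{\beta\}$, and this is exactly where decisiveness (Definition~\ref{defbig}(3) and Lemma~\ref{lemdecisive}) is used. One enumerates $\supp(q,n)$ by increasing decisiveness parameter~$K_\ell$, splitting it into a ``small'' part $S$ (with $\prod_{\alpha\in S}|\val(\gd_\alpha)|<K$), the coordinate $\beta$ itself, and a ``large'' part $L$ whose creatures are hereditarily $(2^{K^n},r)$--big. Lemma~\ref{lemdecisive} homogenizes the colouring over $L$ in one step; the remaining freedom at level~$n$ is then bounded by $\varphi_{\bH^*}({<}n)\cdot K$, and the built--in bigness $b^{\gd_\beta}>g(n)>\varphi_{\bH^*}({<}n)$ together with the decisive relation between $K$ and $b^{\gd_\beta}$ lets a \emph{single} application of bigness on $\gd_\beta$ finish. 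The total norm loss at each coordinate is at most $2$, uniformly in~$n$. Your argument thins at $\beta$ only and pays $|\bH^*(n)|^n$ for level~$n$ at the other coordinates; that price is unaffordable.
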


\begin{proof}
(1)\quad It follows from \ref{pairOK}+\ref{basprop}(2)+\ref{thm:prod}.
\medskip

\noindent (2)\quad The proof is parallel to that of \cite[Lemma
9.1]{KrSh:872}. Assume $p\in\qfl$. Remembering \ref{basprop}(1) we
may use \ref{thm:prod}(3) to find a condition $q\geq p$ such that
\begin{enumerate}
\item[$(*)_1$] the condition $q\rest (\lambda\setminus\{\beta\})$
  ${<}n$--decides the value of $\name{\nu}\rest N_n$ (for each $n$),
  and   
\item[$(*)_2$] $\trunklg(q,\alpha)\geq 972$ for all $\alpha\in\dom(q)$ and 
  $\nor(q(\alpha,m))\geq 972$ whenever $\alpha\in \supp(q,m)$, and
\item[$(*)_3$] $\beta\in\dom(q)$ and if $\supp(q,m)\neq\emptyset$, then
  $|\supp(q,m)|\geq 972$. 
\end{enumerate}
Thus, for each $n$, we have a mapping $E_n:\prodval(q \rest
(\lambda\setminus\{\beta\}),{<}n)\longrightarrow {}^{\textstyle N_n}2$ such
that 
\[\big(q\rest (\lambda\setminus\{\beta\})\big) \wedge
t\forces_{{\mathbb P}_{\lambda\setminus\{\beta\}}(\bK^*,\bSig^*)}\mbox{`` }
\name{\nu}\rest N_n=E_n(t)\mbox{ ''.}\]
We will further strengthen $q$ to a condition $q^*$ such that
$\dom(q^*)=\dom(q)$ and  
\begin{enumerate}
\item[$(*)^{\rm goal}$] for all $n\geq \trunklg(q^*,\beta)$ and $t\in
\prodval \big(q^*\rest (\lambda\setminus\{\beta\}),{<}(n+1)\big)$ we have 
\[\big(\forall B\in q^*(\beta,n)\big)\big(E_{n+1}(t)\prec B\big).\]  
\end{enumerate}
Then clearly we will have $q^*\forces_{\qfl}\mbox{`` }\name{\nu}\in
F(\name{\rho}_\beta)\mbox{ ''}$ and the proof of \ref{forcingOK}(2) will
follow by the standard density argument.

To construct the condition $q^*$ we set $\dom(q^*)=\dom(q)$,
$\trunklg(q^*,\alpha)=\trunklg(q,\alpha)$, and we define $q^*(\alpha,m)$ by
induction on $m$ so that: 

$q^*(\alpha,m)=q(\alpha,m)$ whenever $\alpha\notin\supp(q,m)$ or
$\beta\notin\supp(q,m)$, and 

$q^*(\alpha,m)\in \bSig^*(q(\alpha,m))$, $\nor(q^*(\alpha,m))\geq
  \nor(q(\alpha,m))-2$ for $\alpha\in\supp(q,m)$.

\noindent These demands guarantee that $q^*$ is a condition in $\qfl$
stronger than $q$. 

Fix an $n\geq\trunklg(q,\beta)$. Put $A=\supp(q,n)$ and note that 
$\beta\in A$, $A$ has at least 972 elements (remember $(*)_3$), and
$|A|<n$ (by \ref{forcing}(2)($\gamma$)). 

Set $\gc^0_\alpha=q(\alpha,n)$ for $\alpha\in A$. 

We choose inductively an enumeration $\{\alpha_0,\ldots,\alpha_{|A|-1}\}$ of
$A$ and creatures $\gc^\ell_{\alpha_k}$ (for $\ell\leq k< |A|$) and 
$\gd_{\alpha_k}$ from $\bSig^*(\gc^0_{\alpha_k})$. So assume that for some
$\ell\geq 0$ we already have defined a list $\{\alpha_k:k<\ell\}$ of
distinct elements of $A$ and creatures $\gc^\ell_\alpha$ for $\alpha\in
A\setminus \{ \alpha_0,\ldots, \alpha_{\ell-1}\}$. Each $\gc^\ell_\alpha$ is
$(K^\ell_\alpha,n, r_{\bH^*}(n))$--decisive for some $K^\ell_\alpha$. Put
$K_\ell= \min(\{K^\ell_\alpha:\, \alpha\in A\setminus \{\alpha_0, \ldots,
\alpha_{\ell-1}\}\})$, and choose $\alpha_\ell$ such that
$K_{\alpha_\ell}^\ell=K_\ell$. Let $\gd_{\alpha_\ell}\in
\bSig^*(\gc^\ell_{\alpha_\ell})$ be such that $|\val(\gd_{\alpha_\ell})|\leq
K_\ell$ and $\nor(\gd_{\alpha_\ell})\geq\nor(\gc^\ell_{\alpha_\ell})-
r_{\bH^*}(n)$. For $\alpha\in A\setminus \{\alpha_0,\ldots,\alpha_\ell\}$,
let $\gc^{\ell+1}_\alpha\in\bSig^*(\gc^\ell_\alpha)$ be hereditarily
$(2^{(K_\ell)^n},r_{\bH^*}(n))$--big and such that $\nor(\gc^{\ell+1}_\alpha
) \geq\nor(\gc^\ell_{\alpha_\ell})-r_{\bH^*}(n)$. Iterate this procedure
$|A|-1$ times. At the end, there remains one $\alpha$ that has not been
listed as an $\alpha_\ell$, so we set $\alpha_{|A|-1}=\alpha$ and
$\gd_{\alpha_{|A|-1}}=\gc^{|A|-1}_\alpha$.
  
Since $\gc^{\ell+1}_{\alpha_{\ell+1}}$ is hereditarily $(2^{(K_\ell)^n},
r_{\bH^*}(n))$--big, we see that $2^{(K_\ell)^n}<K_{\ell+1}$. Let $m$ be
such that $\beta=\alpha_m$, and put
\[K=K_m,\quad S=\{\alpha_\ell:\, \ell<m\},\quad L=\{\alpha_\ell:\,
\ell>m\}.\]
It is possible that (at most) one of the sets $S,L$ is empty. By our
choices, 
\begin{enumerate}
\item[$(*)_4$]
\begin{enumerate}
\item[(a)] $\gd_\alpha\in \bSig^*(q(\alpha,n))$, $\nor(\gd_\alpha)\geq 
  \nor(q(\alpha,n))-(n-1)\cdot r_{\bH^*}(n)>900$, and 
\item[(b)] if $S\neq\emptyset$ then $\gd_\beta$ is $(2^{(K_{m-1})^n},
  r_{\bH^*}(n))$--big and hence in particular $(K_{m-1})^{n-2}<K$; if
  $S=\emptyset$ then $K=K_0$,  
\item[(c)] $\prod\limits_{\alpha\in S}|\val(\gd_\alpha)|\leq
  (K_{m-1})^{n-2}<K$ and $|\val(\gd_\beta)|\leq K$,
\item[(d)] $\varphi_{\bH^*}({<}n)<K_0\leq K$ (remember that $\bK(n)$ is
  $(g(n),r_{\bH^*}(n))$--big and $g(n)>\varphi_{\bH^*}({<}n)$),
\item[(e)] if $\alpha\in L$, then $\gd_\alpha$ is
  $(2^{K^n},r_{\bH^*}(n))$--big. 
\end{enumerate}
\end{enumerate}

Let $Z=\{t\in \prodval(q \rest (\lambda\setminus\{\beta\}), {<}(n+1)):
t(\alpha,n)\in\val(\gd_\alpha)\mbox{ for }\alpha\in A\setminus\{\beta\} \}$
and for $s\in \prod\limits_{\alpha\in L}\val(\gd_\alpha)$ let $Z_s=\{t\in Z:
t(\alpha,n) =s(\alpha)\mbox{ for }\alpha\in L\}$. Next, for $t\in Z$ put 
$\cC_t=\{B\in\cB^{\gd_\beta}:E_{n+1}(t)\nprec B\}$. 
\medskip

If $S=\emptyset$, then in what follows ignore $\prod\limits_{\alpha\in
  S}\val(\gd_\alpha)$ and set $K_{m-1}=1$. Assume  $L$ is non-empty
(otherwise move to $(*)_6$). For each $s\in \prod\limits_{\alpha\in 
  L}\val(\gd_\alpha)$ consider a function  
\[c(s):\prodval(q \rest (\lambda\setminus\{\beta\}), {<}n) \times
\prod\limits_{\alpha\in S}\val(\gd_\alpha)\longrightarrow
\cP(\val(\gd_\beta))\] 
such that $c(s)(t_0,t_1)=\cC_{t_0\conc t_1\conc s}$, where $t_0\conc
t_1\conc s\in Z_s$ is obtained by natural concatenation. This determines a
coloring $c$ on $\prod\limits_{\alpha\in L}\val(\gd_\alpha)$ with the range
of size at most 
\[\left( 2^K\right)^{\varphi_{\bH^*}({<}n)\cdot (K_{m-1})^{n-2}}\leq
\left( 2^K\right)^{K\cdot K}=2^{K^3}<2^{K^n}.\]
Since $\bK^*(n)$ is $(n,r_{\bH^*}(n))$--decisive, and each $\gd_\alpha$  is 
hereditarily $(2^{K^n},r_{\bH^*}(n))$--big (for $\alpha\in L$), 
$\nor(\gd_\alpha)>900$ and $|L|\leq n-2$, therefore we may use Lemma
\ref{lemdecisive} to find $q^*(\alpha,n)\in\bSig^*(\gd_\alpha)$ for
$\alpha\in L$ such that 
\begin{enumerate}
\item[$(*)_5$]
\begin{enumerate}
\item[(a)] $\nor(q^*(\alpha,n))\geq \nor(\gd_\alpha)-r_{\bH^*}(n)\cdot n\geq
  \nor(q(\alpha,n))-2$, and
\item[(b)] $c\rest \prod\limits_{\alpha\in L}\val(q^*(\alpha,n))$ is
  constant. 
\end{enumerate}
\end{enumerate}
If $L=\emptyset$ then the procedure described above is not needed. In any
case, letting 
\[X=\prodval(q \rest (\lambda\setminus\{\beta\}), {<}n) \times
\prod\limits_{\alpha\in S}\val(\gd_\alpha),\]
we have a mapping $d:X\longrightarrow \cP(\val(\gd_\beta))$ and
$q^*(\alpha,n)$ for $\alpha\in L$ such that  
\begin{enumerate}
\item[$(*)_6$] if $t\in Z$ and $t(\alpha,n)\in \val(q^*(\alpha,n))$ for
  $\alpha\in L$, then $\cC_t=d(t_0,t_1)$, where $t_0=t\rest
  \big((\dom(q)\setminus \{\beta\})\times n\big)\in \prodval(q \rest
  (\lambda\setminus\{\beta\}), {<}n)$ and $t_1=t\rest (S\times\{n\})\in
  \prod\limits_{\alpha\in S}\val(\gd_\alpha)$.
\end{enumerate}
For each $(t_0,t_1)\in X$ fix one $t=t[t_0,t_1]\in Z$ such that
$t(\alpha,n)\in \val(q^*(\alpha,n))$ for $\alpha\in L$,
$t_0=t\rest\big((\dom(q)\setminus \{\beta\})\times n\big)$ and $t_1=t\rest 
(S\times\{n\})$. Now, for $B\in\val(\gd_\beta)$ we (try to) choose
$(t^B_0,t^B_1)\in X$ such that $B\in\cC_{t[t^B_0,t^B_1]}$, if
possible. Consider a coloring $e:\val(\gd_\beta)\longrightarrow 
{}^{N_{n+1}}2\cup\{*\}$ defined by 
\[e(B)=\left\{\begin{array}{ll}
E_{n+1}(t[t^B_0,t^B_1])&\mbox{ if $(t^B_0,t^B_1)\in X$ is defined,}\\
*           &\mbox{ otherwise.}
\end{array}\right.\]
Since $|X|\leq \varphi_{\bH^*}({<}n)\cdot (K_{m-1})^{n-2}\leq\max
\{(K_{m-1})^{n-1}, \varphi_{\bH^*}({<}n)\}$, we know that the range of the
coloring $e$ has at most $\max\{(K_{m-1})^{n-1}, \varphi_{\bH^*}({<}n)\}+1$  
members. Thus $\gd_\beta$ is $(|\rng(e)|,r_{\bH^*}(n))$--big and we may
choose $q^*(\beta,n)\in\bSig^*(\gd_\beta)$ such that $\nor(q^*(\beta,n))\geq
\nor(\gd_\beta)-r_{\bH^*}(n)\geq \nor(q(\alpha,n))-2>900$ and $e\rest
\val(q^*(\alpha,n))$ is constant. If the constant value were $\eta\in
{}^{N_{n+1}}2$, then we would have $\eta\nprec B$ for all $B\in
\val(q^*(\alpha,n))$, contradicting $\nor(q^*(\beta,n))>0$. Therefore,
\begin{enumerate}
\item[$(*)_7$] $(t^B_0,t^B_1)$ is defined for no $B\in\val(q^*(\beta,n))$
  and hence 
\[\val(q^*(\beta,n))\cap \bigcup\{\cC_{t[t_0,t_1]}:(t_0,t_1)\in X\}=
\emptyset.\]   
\end{enumerate}
For $\alpha\in S$ we set $q^*(\alpha,n)=\gd_\alpha$. Now note that 
\begin{enumerate}
\item[$(*)_8$] if $t\in Z$ is such that $t(\alpha,n)\in q^*(\alpha,n)$
  for $\alpha\in S\cup L$ and $B\in \val(q^*(\beta,n))$, then
  $E_{n+1}(t)\prec B$. 
\end{enumerate}
Why? Assume towards contradiction that $E_{n+1}(t)\nprec B$, i.e., $B\in
\cC_t$. Represent $t$ as $t=t_0\conc t_1\conc s$ where $(t_0,t_1)\in
X$. Then $\cC_t=\cC_{t[t_0,t_1]}$ (by $(*)_6$) and therefore $B\in
\cC_{t[t_0,t_1]}$, contradicting $(*)_7$.
\medskip

\noindent This completes the definition of $q^*$. It follows from $(*)_8$
(for $n\geq\trunklg(q^*,\beta)$) that $(*)^{\rm goal}$ is satisfied. 
\medskip
 
\noindent (3)\quad Follows from (2) and the fact that $F(\rho)\in \cN\cap
\cM$ for $\rho\in\prod\limits_{n<\omega}\bH(m)$. 
\end{proof}

\begin{corollary}
\label{cor1}
It is consistent that 
\[{\rm non}(\cN)={\rm non}(\cM)={\rm non}(\cN\cap\cM)=\aleph_2=2^{\aleph_0}
\mbox{ and }\fdo=\aleph_1.\]
\end{corollary}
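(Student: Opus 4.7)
The plan is to apply the machinery built in Section~2 with $\lambda=\aleph_2$. I would begin in a ground model of ZFC satisfying GCH, so that in particular CH holds and $\aleph_2^{\aleph_0}=\aleph_2$, and then force with $\bbP=\qfl$ taking $\lambda=\aleph_2$. The upshot is that Proposition~\ref{pairOK} verifies precisely the hypotheses on the creating pair required by Theorems~\ref{thm:prod}, \ref{nochain} and \ref{forcingOK}: for each $n$, $\bK^*(n)$ is $(n,r_{\bH^*}(n))$--decisive, $r_{\bH^*}(n)$--halving, and $(g(n),r_{\bH^*}(n))$--big.

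Granted this, I would first invoke Theorem~\ref{forcingOK}(1) to see that $\bbP$ preserves cardinals and cofinalities and forces $2^{\aleph_0}=\aleph_2$, then Theorem~\ref{forcingOK}(3) to conclude $\forces_{\bbP}$``${\rm non}(\cN)={\rm non}(\cM)=\aleph_2$''. In parallel, Theorem~\ref{nochain} (applied to this same creating pair, which meets the standing hypotheses of Theorem~\ref{thm:prod}) yields $\forces_{\bbP}\fdo=\aleph_1$. Together these already give all of the equalities in the statement except the one concerning $\cN\cap\cM$.

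For that last equality, I would argue purely from ideal inclusions. Since $\cN\cap\cM\subseteq\cN$, any subset of $\can$ witnessing ${\rm non}(\cN)$ is not in $\cN\cap\cM$ either, so ${\rm non}(\cN\cap\cM)\leq{\rm non}(\cN)=\aleph_2$. Conversely, because ${\rm non}(\cN)={\rm non}(\cM)=\aleph_2$, every subset of $\can$ of cardinality less than $\aleph_2$ lies simultaneously in $\cN$ and in $\cM$, hence in $\cN\cap\cM$; therefore ${\rm non}(\cN\cap\cM)\geq\aleph_2$. Combining the two inequalities gives ${\rm non}(\cN\cap\cM)=\aleph_2$, completing the proof.

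There is no genuine obstacle here: all the technical work has already been absorbed into Theorems~\ref{nochain} and \ref{forcingOK} together with Proposition~\ref{pairOK}. The corollary is just the matter of simultaneously invoking those results for $(\bK^*,\bSig^*)$ with $\lambda=\aleph_2$ (having chosen a ground model where $\lambda^{\aleph_0}=\lambda$) and running the trivial ideal-inclusion calculation for ${\rm non}(\cN\cap\cM)$.
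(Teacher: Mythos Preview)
Your proposal is correct and follows essentially the same approach as the paper: start from a model of CH, force with ${\mathbb P}_{\aleph_2}(\bK^*,\bSig^*)$, and read off the conclusion from Theorems~\ref{nochain} and \ref{forcingOK}. The paper's proof is even terser than yours, simply citing those two theorems; your explicit verification that $\lambda^{\aleph_0}=\lambda$ and your ideal-inclusion computation for ${\rm non}(\cN\cap\cM)$ are routine details the paper leaves to the reader.
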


\begin{proof}
Start with a model of CH and force with ${\mathbb
  P}_{\aleph_2}(\bK^*,\bSig^*)$. It follows from \ref{forcingOK} and
\ref{nochain} that the resulting model is as required.
\end{proof}

In models for the statement in Corollary \ref{cor1} necessarily ${\rm
  cov}(\cN)={\rm cov}(\cM)=\aleph_1$. However, it is not clear if we could
not get a parallel result for $\bdo$ and ${\rm cov}$.

\begin{problem}
Is it consistent that 
\[{\rm cov}(\cN)={\rm cov}(\cM)=\aleph_2=2^{\aleph_0}
\mbox{ and }\bdo=\aleph_1\mbox{ ?}\]
In particular, is it consistent that $\fdo>\bdo$ ?
\end{problem}

Directly from \ref{cor1} we also obtain
\begin{corollary}
\label{cor2}
It is consistent that ${\rm non}(\cN\cap\cM)=\aleph_2$ and there is no
$\subset$--increasing chain of Borel subset of $\can$ of length $\omega_2$. 
\end{corollary}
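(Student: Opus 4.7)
The plan is to show that Corollary \ref{cor2} is an immediate consequence of Corollary \ref{cor1} together with the inequality $\bdo\leq\fdo$ noted in the introduction. I would work in the model produced in the proof of Corollary \ref{cor1}, namely the extension of a ground model of CH by ${\mathbb P}_{\aleph_2}(\bK^*,\bSig^*)$. By Theorem \ref{forcingOK}(3), in this extension ${\rm non}(\cN)={\rm non}(\cM)=\aleph_2$, hence also ${\rm non}(\cN\cap\cM)=\aleph_2$ (since any set of size $<\aleph_2$ belongs to both $\cN$ and $\cM$, hence to $\cN\cap\cM$).

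Next I would invoke Theorem \ref{nochain}, which gives $\fdo=\aleph_1$ in the extension. Combining this with the inequality $\bdo\leq\fdo$ established in the introduction (via the relation coding Borel sets), we obtain $\bdo=\aleph_1$ in the extension as well.

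Finally, suppose towards contradiction that $\langle B_\alpha:\alpha<\omega_2\rangle$ were a strictly $\subset$--increasing chain of Borel subsets of $\can$. Then $\cf(\omega_2)=\omega_2$ would be a cofinality witnessed by a $\subset$--increasing chain of Borel sets, so by the definition of $\bdo$ we would have $\aleph_2\leq\bdo$, contradicting $\bdo=\aleph_1$. Hence no such chain exists, and the corollary follows. There is no real obstacle here: this is a bookkeeping deduction from the two previously established theorems, and the only tiny point to verify is the trivial observation that small sets are automatically in $\cN\cap\cM$, which is what gives ${\rm non}(\cN\cap\cM)=\aleph_2$ from ${\rm non}(\cN)={\rm non}(\cM)=\aleph_2$.
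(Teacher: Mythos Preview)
Your proof is correct and is exactly the intended unpacking of the paper's one-line justification ``Directly from \ref{cor1}'': you use the model of Corollary~\ref{cor1}, read off ${\rm non}(\cN\cap\cM)=\aleph_2$ (already stated there) and $\fdo=\aleph_1$, and then invoke $\bdo\leq\fdo$ from the introduction to rule out an $\omega_2$--chain. The only redundancy is that you re-derive ${\rm non}(\cN\cap\cM)=\aleph_2$ from ${\rm non}(\cN)={\rm non}(\cM)=\aleph_2$, while Corollary~\ref{cor1} already asserts it explicitly, and you could also have cited Theorem~\ref{nochain} directly for $\bdo=\aleph_1$ rather than passing through $\fdo$.
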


\section{Monotone hulls}
The interest in Corollary \ref{cor2} came from the questions concerning
Borel hulls. 

\begin{definition}
Let ${\rm Borel}(\can)$ be the family of all Borel subsets of $\can$,
$\cI$ be a $\sigma$--ideal on $\can$ with Borel basis and $\cS_\cI$ be the
$\sigma$--algebra of subsets of $\can$ generated by ${\rm Borel}(\can)\cup
\cI$. Let $\cF\subseteq \cS_\cI$. A {\em monotone Borel hull\/} on $\cF$
with respect to $\cI$ is a mapping $\psi:\cF\longrightarrow {\rm
  Borel}(\can)$ such that 
\begin{itemize}
\item $A\subseteq \psi(A)$ and $\psi(A)\setminus A\in\cI$ for all $A\in\cF$,
  and 
\item if $A_1\subseteq A_2$ are from $\cF$, then $\psi(A_1)\subseteq
  \psi(A_2)$. 
\end{itemize}
If the range of $\psi$ consists of sets of some Borel class $\cK$, then we  
say that $\psi$ is a monotone $\cK$ hull operation.
\end{definition}

\noindent As discussed in Balcerzak and Filipczak \cite[Question
24]{BaFi11}, \ref{cor2} implies the following.

\begin{corollary}
\label{cor3}
It is consistent that
\begin{itemize}
\item there are no monotone Borel hulls on $\cM$ with respect to $\cM$, and 
\item there are no monotone Borel hulls on $\cN$ with respect to $\cN$, and 
\item there are no monotone Borel hulls on $\cM\cap \cN$ with respect to
  $\cM\cap\cN$. 
\end{itemize}
\end{corollary}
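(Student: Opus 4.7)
The plan is to derive Corollary \ref{cor3} as a direct consequence of Corollary \ref{cor2} (equivalently, of Corollary \ref{cor1}) by invoking the implication $(\otimes)$ from the introduction, applied separately to each of the three ideals $\cM$, $\cN$, and $\cM\cap\cN$.

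Working inside the model from Corollary \ref{cor1}, one has ${\rm non}(\cM) = {\rm non}(\cN) = {\rm non}(\cM\cap\cN) = \aleph_2 = 2^{\aleph_0}$ and $\fdo = \aleph_1$, whence also $\bdo = \aleph_1$ (since $\bdo \leq \fdo$; alternatively, this is exactly the content of Corollary \ref{cor2}). For each $\cI \in \{\cM, \cN, \cM\cap\cN\}$ I would verify the three hypotheses of $(\otimes)$: that $\cI$ has a Borel basis (classical for $\cM$ and $\cN$; for $\cM\cap\cN$, any set in $\cM\cap\cN$ sits inside the intersection of a Borel $\cM$-hull and a Borel $\cN$-hull, and this intersection still belongs to $\cM\cap\cN$ and is Borel), that $\bdo = \aleph_1 < \aleph_2 = {\rm non}(\cI)$, and that ${\rm non}(\cI) = \aleph_2$ is regular. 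With these in place, $(\otimes)$ yields that there is no $\subset$-monotone map $\psi : \cI \to {\rm Borel}(\can) \cap \cI$.

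The final step translates this conclusion into the language of monotone Borel hulls as defined just above. If $\psi : \cI \to {\rm Borel}(\can)$ were a monotone Borel hull on $\cI$ with respect to $\cI$, then for each $A \in \cI$ one would have $\psi(A) = A \cup (\psi(A) \setminus A) \in \cI$, so $\psi$ would in fact take its values inside ${\rm Borel}(\can) \cap \cI$, giving exactly the forbidden monotone map and hence a contradiction.

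I do not anticipate a substantial obstacle: every ingredient --- the values of $\bdo$ and ${\rm non}(\cI)$ in the relevant model, the regularity of $\aleph_2$, and the criterion $(\otimes)$ --- has already been established or cited earlier in the paper. The only minor bookkeeping is the Borel-basis property for $\cM\cap\cN$ and the inequality $\bdo \leq \fdo$, both of which are immediate.
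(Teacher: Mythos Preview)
Your proposal is correct and follows precisely the route the paper intends: the introduction already spells out that $(\otimes)$ applied in the model of Corollary~\ref{cor1}/\ref{cor2} yields the non-existence of monotone Borel hulls for $\cM$, $\cN$, and $\cM\cap\cN$, and the paper's own treatment of Corollary~\ref{cor3} is simply a one-line reference to this implication. Your added bookkeeping (Borel basis for $\cM\cap\cN$, and observing that a monotone Borel hull automatically lands in ${\rm Borel}(\can)\cap\cI$) fills in details the paper leaves implicit.
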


The non-existence of monotone Borel hulls on $\cI$ implies non-existence of 
such hulls on $\cS_\cI$. While some partial results were presented in 
\cite{ElMa09} and \cite{BaFi11}, not much is known about the converse
implication. 

\begin{problem}
[Cf. Balcerzak and Filipczak {\cite[Question 26]{BaFi11}}]
Let $\cI\in \{\cM,\cN\}$. Is it consistent that there exists a monotone
Borel hull on $\cI$ (with respect to $\cI$) but there is no such hull on
$\cS_\cI$ ? In particular, is it consistent that ${\rm add}(\cI)={\rm
  cof}(\cI)$ but there is no monotone Borel hull on $\cS_\cI$ ?
\end{problem}

It was noted in \cite[Proposition 7]{BaFi11} (see also Elekes and
M\'{a}th\'{e} \cite[Theorem 2.4]{ElMa09}) that ${\rm add}(\cI)={\rm
  cof}(\cI)$ implies that there exists a monotone Borel hull on $\cI$ (with
respect to $\cI$). It appears that was the only situation in which the
positive result of this kind was known. Using a finite support iteration of
ccc forcing notions we will show in this section that, consistently, we may
have ${\rm add}(\cI)<{\rm cof}(\cI)$ (for $\cI\in\{\cN,\cM\}$) and yet there
are monotone hulls for $\cI$.  

\begin{definition}
\label{mhg}
Let $\cI$ be an ideal of subsets of $\can$.
\begin{enumerate}
\item We say that a family $\cB\subseteq {\rm Borel}(\can)\cap \cI$ is {\em
    an mhg--base for $\cI$} if\footnote{``mhg'' stands for ``monotone hull
      generating''} 
\begin{enumerate}
\item[(a)] $\cB$ is a basis for $\cI$, i.e., $(\forall A\in\cI)(\exists B\in
  \cB)(A\subseteq B)$, and
\item[(b)] if $\langle B_i:i<\omega_1\rangle$ is a sequence of elements of
  $\cB$, then for some $i<j<\omega_1$ we have $B_i\subseteq B_j$.
\end{enumerate}
\item Let $\alpha^*,\beta^*$ be limit ordinals. {\em An
    $\alpha^*\times\beta^*$--base for $\cI$} is a sequence $\langle
  B_{\alpha,\beta}:\alpha<\alpha^*\ \&\ \beta<\beta^*\rangle$ of Borel sets 
    from $\cI$ such that it forms a basis for $\cI$ (i.e., (a) above holds)
    and
\begin{enumerate}
\item[(c)] for each $\alpha_0,\alpha_1<\alpha^*$, $\beta_0,\beta_1<\beta^*$
  we have
\[B_{\alpha_0,\beta_0}\subseteq B_{\alpha_1,\beta_1}\quad \Leftrightarrow\quad
\alpha_0\leq \alpha_1\ \&\ \beta_0\leq\beta_1.\]
\end{enumerate}
\end{enumerate}
\end{definition}

\begin{proposition}
\label{propA}
Assume that $\langle B_{\alpha,\beta}:\alpha<\alpha^*\ \&\
\beta<\beta^*\rangle$ is an $\alpha^*\times\beta^*$--base for $\cI$. Then:
\begin{enumerate}
\item[(i)] $B_{\alpha,\beta}\neq B_{\alpha',\beta'}$ whenever
  $(\alpha,\beta)\neq (\alpha',\beta')$, $\alpha,\alpha' <\alpha^*$,
  $\beta,\beta'<\beta^*$.
\item[(ii)] $\{B_{\alpha,\beta}:\alpha<\alpha^*\ \&\ \beta<\beta^*\}$ is an
  mhg--base for $\cI$.
\item[(iii)] ${\rm add}(\cI)=\min\{\cf(\alpha^*),\cf(\beta^*)\}$ and ${\rm
    cof}(\cI)=\max\{\cf(\alpha^*),\cf(\beta^*)\}$.
\end{enumerate}
\end{proposition}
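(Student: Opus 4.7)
The plan is to handle the three parts in turn, with property (c) as the sole tool. Part (i) falls out directly: if $(\alpha,\beta) \neq (\alpha',\beta')$, then the coordinatewise comparison fails in at least one of the two directions, so the corresponding inclusion fails by (c), forcing $B_{\alpha,\beta} \neq B_{\alpha',\beta'}$.

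Part (ii) is the substantive combinatorial step, and the main obstacle. Given a sequence $\langle B_{\alpha_i,\beta_i} : i < \omega_1\rangle$, I must produce $i<j$ with $\alpha_i\leq\alpha_j$ and $\beta_i\leq\beta_j$, which by (c) yields the required inclusion. My plan is a recursive pigeonhole construction: suppose for contradiction that no such $(i,j)$ exists, and build $i_0<i_1<\cdots$ together with uncountable reservoirs $I_n\subseteq(i_n,\omega_1)$ such that every $j\in I_n$ satisfies $\alpha_j<\alpha_{i_n}$ or $\beta_j<\beta_{i_n}$; by pigeonhole an uncountable $I_n^*\subseteq I_n$ consistently ``kills'' one fixed coordinate. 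Set $i_{n+1}=\min I_n^*$ and $I_{n+1}=I_n^*\setminus\{i_{n+1}\}$. Since nested reservoirs inherit the coordinate-killing, the subsequence of stages $n_0<n_1<\cdots$ at which (say) the $\alpha$-coordinate is killed --- one of the two coordinates must be killed infinitely often --- produces a strictly decreasing $\omega$-sequence $\alpha_{i_{n_0}}>\alpha_{i_{n_1}}>\cdots$ of ordinals, which is impossible. (An alternative is to invoke the Erd\H{o}s--Dushnik--Miller partition relation $\omega_1\to(\omega_1,\omega)^2$ applied to the natural 2-coloring that records which coordinate has decreased.)

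Part (iii) is cofinality bookkeeping. Let $\lambda=\cf(\alpha^*)$, $\mu=\cf(\beta^*)$, and fix cofinal sequences $\langle\alpha(\xi):\xi<\lambda\rangle$ and $\langle\beta(\eta):\eta<\mu\rangle$. By (c), every $B_{\alpha,\beta}$ is contained in some $B_{\alpha(\xi),\beta(\eta)}$, so $\{B_{\alpha(\xi),\beta(\eta)}\}$ is a basis of size $\max\{\lambda,\mu\}$, giving ${\rm cof}(\cI)\leq\max\{\lambda,\mu\}$. Conversely, for any basis $\cF$, fix for each $F\in\cF$ a pair $(\alpha_F,\beta_F)$ with $F\subseteq B_{\alpha_F,\beta_F}$; the first projections must be cofinal in $\alpha^*$ (for every $\alpha_0<\alpha^*$ the set $B_{\alpha_0,0}$ must be dominated by some $B_{\alpha_F,\beta_F}$, and (c) then forces $\alpha_0\leq\alpha_F$), and symmetrically for the second, so $|\cF|\geq\max\{\lambda,\mu\}$. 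For additivity: $\bigcup_{\xi<\lambda}B_{\alpha(\xi),0}$ cannot sit inside any single $B_{\alpha^0,\beta^0}$ (that would bound the $\alpha(\xi)$'s), so ${\rm add}(\cI)\leq\lambda$ and symmetrically $\leq\mu$; conversely, $\kappa<\min\{\lambda,\mu\}$ many sets $A_i\subseteq B_{\alpha_i,\beta_i}$ have both coordinate suprema strictly below $\alpha^*,\beta^*$, so (c) places $\bigcup_iA_i$ inside a single $B\in\cI$.
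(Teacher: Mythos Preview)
Your proof is correct and supplies precisely the details the paper omits (its proof reads only ``Straightforward''). For part~(ii) there is a marginally quicker route: any $\omega_1$-sequence of ordinals has an uncountable non-decreasing subsequence (the set of indices $i$ with $\alpha_j\geq\alpha_i$ for all $j>i$ is uncountable, since otherwise one builds an infinite descending chain past some bound), and applying this twice---first to the $\alpha$-coordinates, then to the $\beta$-coordinates on the resulting subsequence---immediately gives the required $i<j$; but your recursive-reservoir construction and the Erd\H{o}s--Dushnik--Miller alternative are equally valid.
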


\begin{proof}
Straightforward.
\end{proof}

\begin{proposition}
\label{gethull}
Suppose that an ideal $\cI$ on $\can$ has an mhg--base $\cB\subseteq {\rm
  Borel}(\can)\cap \cI$. Then there exists a monotone hull operation
$\psi:\cI\longrightarrow {\rm Borel}(\can)\cap \cI$ on $\cI$. If,
additionally, $\cB\subseteq \Pi^0_\xi$, $\xi<\omega_1$, then $\psi$ can be
taken to have values in $\Pi^0_\xi$. 
\end{proposition}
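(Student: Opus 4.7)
My plan is to take $\psi(A)$ to be simply the intersection of all basis sets containing $A$, and to use the mhg hypothesis to show this intersection already equals the intersection of a countable sub-family (so it is Borel). Concretely, for $A\in\cI$ set $\cB_A=\{B\in\cB:A\subseteq B\}$, which is non-empty since $\cB$ is a basis, and declare $\psi(A)=\bigcap\cB_A$. The defining properties of a monotone hull are then essentially automatic: $A\subseteq B$ for every $B\in\cB_A$ gives $A\subseteq\psi(A)$; $\psi(A)\subseteq B\in\cI$ for any $B\in\cB_A$ gives $\psi(A)\in\cI$ (hence $\psi(A)\setminus A\in\cI$); and $A\subseteq A'$ implies $\cB_{A'}\subseteq\cB_A$, so intersecting fewer sets, $\psi(A)=\bigcap\cB_A\subseteq\bigcap\cB_{A'}=\psi(A')$.

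The only nontrivial point is that $\psi(A)$ is Borel (and in $\Pi^0_\xi$ in the refined case), for which it suffices to produce, for each $A\in\cI$, a countable sub-family $\cB^*_A\subseteq\cB_A$ that is \emph{cofinal from below in $\cB_A$}, meaning that for every $B\in\cB_A$ there exists $B'\in\cB^*_A$ with $B'\subseteq B$. Such a $\cB^*_A$ yields $\bigcap\cB^*_A=\bigcap\cB_A$: one inclusion follows from $\cB^*_A\subseteq\cB_A$, and for the reverse, given any $B\in\cB_A$ pick $B'\in\cB^*_A$ with $B'\subseteq B$, so that $\bigcap\cB^*_A\subseteq B'\subseteq B$. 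Then $\psi(A)=\bigcap\cB^*_A$ exhibits $\psi(A)$ as a countable intersection of Borel (resp.\ $\Pi^0_\xi$) sets, hence Borel (resp.\ $\Pi^0_\xi$, using closure of $\Pi^0_\xi$ under countable intersections).

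To produce such a $\cB^*_A$ I would argue by contradiction. Assume no countable subfamily of $\cB_A$ is cofinal from below in $\cB_A$, and build a sequence $\langle B_\xi:\xi<\omega_1\rangle$ in $\cB_A$ by recursion: at step $\xi<\omega_1$, the countable family $\{B_\eta:\eta<\xi\}$ is, by assumption, not cofinal from below in $\cB_A$, so there is $B_\xi\in\cB_A$ with $B_\eta\not\subseteq B_\xi$ for all $\eta<\xi$. The resulting $\omega_1$-sequence in $\cB$ then satisfies $B_\eta\not\subseteq B_\xi$ for all $\eta<\xi<\omega_1$, directly contradicting clause (b) of Definition \ref{mhg}(1).

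The only substantive obstacle is the $\omega_1$-recursion above, and the mhg clause is perfectly tailored to rule it out; all other verifications (monotonicity, $A\subseteq\psi(A)$, membership in $\cI$, and the Borel/$\Pi^0_\xi$ class of $\psi(A)$) are immediate.
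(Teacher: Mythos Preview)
Your proof is correct and follows essentially the same approach as the paper: define $\psi(A)=\bigcap\{B\in\cB:A\subseteq B\}$ and use the mhg condition to show this intersection is already achieved by a countable subfamily. The only cosmetic difference is that the paper packages the recursion as a Zorn's-lemma argument (taking a $\trianglelefteq$-maximal sequence $\langle B_i:i<\gamma\rangle$ in $\cB_A$ with $B_i\nsubseteq B_j$ for $i<j$, necessarily of countable length), whereas you phrase it as a direct contradiction building an $\omega_1$-sequence; the underlying idea is identical.
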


\begin{proof}
For a set $A\in\cI$ let $\cS_A$ be the family of all sequences
$\bar{B}=\langle B_i:i<\gamma\rangle\subseteq\cB$ satisfying
\begin{enumerate}
\item[$(*)_1$] $(\forall i<\gamma)(A\subseteq B_i)$ and 
\item[$(*)_2$] $(\forall i<j<\gamma)(B_i\nsubseteq B_j)$.
\end{enumerate}
Note that for each $\bar{B}\in\cS_A$ we have $\lh(\bar{B})<\omega_1$ (by
\ref{mhg}(1)(b) and $(*)_2$). Clearly, every $\trianglelefteq$--increasing
chain of elements of $\cS_A$ has a $\trianglelefteq$--upper bound in
$\cS_A$, so we may choose $\bar{B}_A=\langle B^A_i:i<\gamma_A\rangle\in
\cS_A$ which has no proper extension in $\cS_A$. Put
$\psi(A)=\bigcap\limits_{i<\gamma_A} B^A_i$. Plainly, $A\subseteq\psi(A)
\in\cI$ and $\psi(A)$ is a Borel set, and if $\cB\subseteq\Pi^0_\xi$ then
also $\psi(A)\in\Pi^0_\xi$.  

\begin{claim}
\label{clx}
$\psi(A)=\bigcap\{B\in\cB:A\subseteq B\}$  
\end{claim}

\begin{proof}[Proof of the Claim]
By $(*)_1$ we see that $\psi(A)\supseteq \bigcap\{B\in\cB:A\subseteq
B\}$. To show the converse inclusion suppose $B\in\cB$, $A\subseteq B$. By
the choice of $\bar{B}_A$ we know that $\bar{B}_A\conc \langle
B\rangle\notin\cS_A$ and hence $B^A_i\subseteq B$ for some
$i<\gamma_A$. Consequently $\psi(A)\subseteq B$. 
\end{proof}
It follows from the above claim that $A_1\subseteq A_2\in\cI$ implies
$\psi(A_1)\subseteq \psi(A_2)$. 
\end{proof}

Bartoszy\'nski and Kada \cite{BaKa05} showed that for any
$\sigma$--directed partial order $Q$ there is a ccc forcing notion
$\bbP$ such that
\[\forces_{\bbP}\mbox{`` $\cM$ has a basis order isomorphic
  to $Q$ with respect to set--inclusion ''.}\]
A parallel result for $\cN$ was given by Burke and Kada
\cite{BuKa04}. These theorems imply that for uncountable cardinals
$\kappa$ and $\lambda$ we may force that $\cM$ has a
$\kappa\times\lambda$--basis, and we may also force that 
$\cN$ has a $\kappa\times\lambda$--basis. The corresponding forcing
notions (for both cases) were essentially versions of  ``FS iterations
with partial memories'' used in Shelah \cite{Sh:592, Sh:546, Sh:619},
Mildenberger and Shelah \cite{MdSh:684} and Shelah and Thomas
\cite{ShTh:524}.  We will use explicitly the method of ``FS iterations
with partial memories'' to construct a model in which {\em both\/}
ideals have $\kappa\times\lambda$--bases.  

\begin{theorem}
\label{modelwithmhg}
Let $\kappa,\lambda$ be cardinals of uncountable cofinality, $\kappa\leq
\lambda$. There is a ccc forcing notion $\bQ^{\kappa,\lambda}$ of size
$\lambda^{\aleph_0}$ such that
\[\begin{array}{ll}
\forces_{\bQ^{\kappa,\lambda}}&\mbox{`` the meager ideal $\cM$ has a
  $\kappa\times\lambda$--basis consisting of $\Sigma^0_2$ sets, and}\\
&\mbox{\ \ the null ideal $\cN$ has a $\kappa\times\lambda$--basis
  consisting of $\Pi^0_2$ sets ''.}
\end{array}\] 
\end{theorem}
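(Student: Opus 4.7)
The plan is to construct $\bQ^{\kappa,\lambda}$ as a finite-support ccc iteration with partial memory, indexed by a well-ordering of $\kappa\times\lambda$ extending the coordinate-wise partial order $\leq_{\rm coord}$ (where $(\alpha,\beta)\leq_{\rm coord}(\alpha',\beta')$ means $\alpha\leq\alpha'$ and $\beta\leq\beta'$). For each $(\alpha,\beta)\in\kappa\times\lambda$ let
\[M(\alpha,\beta)=\{(\alpha',\beta')\in\kappa\times\lambda:(\alpha',\beta')\leq_{\rm coord}(\alpha,\beta)\},\]
and write $\bbP_{M(\alpha,\beta)}$ for the corresponding complete subforcing. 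At stage $(\alpha,\beta)$ the iterand is a $\bbP_{M(\alpha,\beta)\setminus\{(\alpha,\beta)\}}$--name for the product of the \emph{amoeba for category} and the \emph{amoeba for measure}, as computed in that intermediate extension. These ccc forcings respectively add a $\Sigma^0_2$ meager set $\name{M}_{\alpha,\beta}$ containing every meager $\Sigma^0_2$ subset of $\can$ coded in $\bV^{\bbP_{M(\alpha,\beta)\setminus\{(\alpha,\beta)\}}}$, and a $\Pi^0_2$ null set $\name{N}_{\alpha,\beta}$ with the analogous covering property.

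I would then verify four items in turn. \emph{Monotonicity:} if $(\alpha,\beta)\leq_{\rm coord}(\alpha',\beta')$ then $M(\alpha,\beta)\subseteq M(\alpha',\beta')\setminus\{(\alpha',\beta')\}$, so $\name{M}_{\alpha,\beta}$ is already coded in the memory used at stage $(\alpha',\beta')$ and is hence contained in $\name{M}_{\alpha',\beta'}$ (the argument for $\name{N}$ is identical). \emph{Basis property:} by ccc, any meager Borel set in the final extension is contained in a meager $\Sigma^0_2$ set whose Borel code lies in the extension by a countable subset $J\subseteq\kappa\times\lambda$; since $\kappa,\lambda$ have uncountable cofinality we can pick $(\alpha,\beta)$ with $J\subseteq M(\alpha,\beta)\setminus\{(\alpha,\beta)\}$, and then $\name{M}_{\alpha,\beta}$ covers the set; the null case is parallel. \emph{Size and ccc:} a standard computation for finite-support iterations of ccc forcings of length $\lambda$ with iterands of size $\leq 2^{\aleph_0}$ gives $|\bQ^{\kappa,\lambda}|=\lambda^{\aleph_0}$ and the chain condition.

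The main obstacle is the \emph{strict non-inclusion}: when $(\alpha,\beta)\not\leq_{\rm coord}(\alpha',\beta')$ we must exhibit a real in $\name{M}_{\alpha,\beta}\setminus\name{M}_{\alpha',\beta'}$. Without loss of generality $\alpha>\alpha'$, so the coordinate $(\alpha,\beta')$ lies in $M(\alpha,\beta)$ but not in $M(\alpha',\beta')$. Consider the amoeba-generic real added at stage $(\alpha,\beta')$: by design of the amoeba for category this real is an element of $\name{M}_{\alpha,\beta'}$, and by monotonicity it lies in $\name{M}_{\alpha,\beta'}\subseteq \name{M}_{\alpha,\beta}$. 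On the other hand $M(\alpha',\beta')\subseteq M(\alpha,\beta')\setminus\{(\alpha,\beta')\}$ (since $(\alpha,\beta')\notin M(\alpha',\beta')$), and the Cohen-type genericity of the amoeba real over the smaller extension $\bV^{\bbP_{M(\alpha,\beta')\setminus\{(\alpha,\beta')\}}}$ ensures that it avoids the meager Borel set $\name{M}_{\alpha',\beta'}$ coded already there. The symmetric case $\beta>\beta'$ and the measure-theoretic analogue are handled identically. The technical work is to verify that the partial-memory subforcings $\bbP_{M(\alpha,\beta)}$ form a coherent directed system of complete subforcings, and that the claimed genericity of the amoeba-generic reals over partial-memory intermediate extensions genuinely holds — both of which are standard features of the FS iteration with partial memory formalism, but still the main place where care is required.
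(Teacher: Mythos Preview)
Your overall framework matches the paper's exactly: a finite-support iteration with partial memory over $\kappa\times\lambda$, memory at $(\alpha,\beta)$ equal to the coordinate-wise downward closure, iterands the product $\bbA\times\bbB$ of the two amoeba forcings as computed over that memory. Monotonicity, the basis property, ccc, and the size bound go through just as you say and just as in the paper.

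The gap is in the non-inclusion step. You write ``without loss of generality $\alpha>\alpha'$, so the coordinate $(\alpha,\beta')$ lies in $M(\alpha,\beta)$'', but $(\alpha,\beta')\in M(\alpha,\beta)$ also requires $\beta'\leq\beta$. Together with $\alpha>\alpha'$ this forces $(\alpha',\beta')<_{\rm coord}(\alpha,\beta)$, so your argument (and its symmetric twin for $\beta>\beta'$) only treats the \emph{comparable} case. When the pairs are \emph{incomparable}, say $\alpha>\alpha'$ and $\beta<\beta'$, there is \emph{no} stage $s$ with $(\alpha',\beta')<_{\rm coord} s\leq_{\rm coord}(\alpha,\beta)$ at all; hence no stage below $(\alpha,\beta)$ whose memory already contains the code for $\name{M}_{\alpha',\beta'}$. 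Your strategy of exhibiting a generic real from a single well-chosen stage therefore cannot separate the two sets in this case.

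This is exactly where the paper does the real work. Writing $i=\pi(\alpha_1,\beta_1)$ and $j=\pi(\alpha_0,\beta_0)$ for incomparable indices, one has $i\notin a_j$ and $j\notin a_i$; the partial-memory structure then lets one strengthen a condition at coordinate $j$ without disturbing coordinate $i$, and vice versa. The paper exploits this to \emph{construct} a name $\name{\eta}$ by a block-by-block density argument (their $(\circledast)_{13}$): at each step one first extends the current finite approximation so that it still meets the generic closed set $\name{F}^\ell_i$, and then strengthens at coordinate $j$ to force that same block to miss $\name{F}^\ell_j$. In the limit $\name{\eta}$ witnesses the required non-inclusion between $\name{B}_{\alpha_0,\beta_0}$ and $\name{B}_{\alpha_1,\beta_1}$. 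A secondary issue: ``the amoeba-generic real'' is not a well-defined object --- amoeba adds a generic closed set, not a canonical real --- and no particular branch of that set is automatically Cohen-generic over the partial extension; making this precise would lead you to essentially the same hands-on construction.
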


\begin{proof}
The forcing notion $\bQ^{\kappa,\lambda}$ will be obtained by means of
finite support iteration of ccc forcing notions. The iterands will be
products of the Amoeba for Category $\bbB$ and Amoeba for Measure $\bbA$ but
{\em considered over partial sub-universes only}. 

We will use the notation and some basic facts stated in the third
section of \cite{ShTh:524}.  

Let us recall the forcings $\bbA$ and $\bbB$ used as iterands. 
\begin{itemize}
\item A condition in $\bbA$ is a tree $T\subseteq\fs$ such that
$\mu([T])>\frac{1}{2}$ and $\mu([t]\cap [T])>0$ for all $t\in T$. The order
$\leq_{\bbA}$ of $\bbA$ is the reverse inclusion.  
\item A condition in $\bbB$ is a pair $(n,T)$ such that $n\in\omega$,
  $T\subseteq\fs$ is a tree with no maximal nodes and $[T]$ is a nowhere
  dense subset of $\can$. The order $\leq_{\bbB}$ of $\bbB$ is given by:\\ 
$(n,T)\leq_{\bbB} (n',T')$ if and only if $n\leq n'$, $T\subseteq T'$ and
$T\cap {}^{\textstyle n} 2= T'\cap {}^{\textstyle n} 2$.
\end{itemize}
Both $\bbA$ and $\bbB$ are (nice definitions of) ccc forcing notions, $\bbB$
is $\sigma$--centered and if $\bV'\subseteq\bV''$ are universes of set
theory then $\bbA^{\bV'}$ is still ccc in $\bV''$. We will use the following
immediate properties of these forcing notions. 
\begin{enumerate}
\item[$(\circledast)_1$] If $G\subseteq\bbA$ is generic over $\bV$,
  $F=\bigcap\{[T]: T\in G\}$, then $F$ is a closed subset of $\can$,
  $\mu(F)=\frac{1}{2}$ and $F$ is disjoint from every Borel null set coded in
  $\bV$. Hence the set $F^*=\{x\in\can: (\forall y\in F)(\exists^\infty
  n)(x(n)\neq y(n))\}$ is a null $\Pi^0_2$ set and it includes all Borel
  null sets coded in $\bV$.\\ 
Let $\name{F}_\bbA,\name{F}_\bbA^*$ be $\bbA$--names for the sets $F,F^*$, 
respectively. 
\item[$(\circledast)_2$] If $G\subseteq \bbB$ is generic over $\bV$, $F=
  \bigcup\{[T]:(\exists n)((n,T)\in G)\}$, then $F$ is a closed nowhere
  dense subset of $\can$. Letting $F^*=\{x\in\can: (\exists y\in F)(
  \forall^\infty n)(x(n)=y(n))\}$ we get a meager $\Sigma^0_2$ set including
  all Borel meager sets coded in $\bV$.\\
Let $\name{F}_\bbB,\name{F}_\bbB^*$ be $\bbB$--names for the sets $F,F^*$, 
respectively. 
\item[$(\circledast)_3^a$] If $T\in\bbA$, $t\in T$, then there is
  $T'\geq_\bbA T$ such that $T'\forces_\bbA
  [t]\cap\name{F}_\bbA\neq\emptyset$.  
\item[$(\circledast)_3^b$] If $T\in\bbA$, $n\in\omega$, then there is $N>n$
  such that for each $\nu\in {}^{[n,N)}2$ there is $T'\geq_\bbA T$ with
$T'\forces_\bbA (\forall y\in\name{F}_\bbA)(y\rest [n,N)\neq \nu)$.
\item[$(\circledast)_4^a$] If $(n,T)\in\bbB$, $t\in T$, $\lh(t)\leq n$,
  $m_1>m_0\geq n$ and $\nu\in {}^{[m_0,m_1)}2$, then there are
  $(n',T')\geq_\bbB (n,T)$ and $s\in T'$ such that $t\vartriangleleft s$ and
  $s\rest [m_0,m_1)=\nu$ (and $(n',T')\forces_\bbB [s]\cap\name{F}_\bbB\neq
  \emptyset$). 
\item[$(\circledast)_4^b$] If $(n,T)\in \bbB$, $m_0<\omega$, then there are
  $m_1>m_0$ and $\nu\in {}^{[m_0,m_1)} 2$ and $(n',T')\geq_\bbB (n,T)$
  such that $(n',T')\forces_\bbB (\forall y\in\name{F}_\bbB)(y\rest
  [m_0,m_1)\neq \nu)$.  
\end{enumerate}
Fix an ordinal $\gamma$ and a bijection $\pi:\kappa\times\lambda
\stackrel{\rm onto}{\longrightarrow}\gamma$ such that 
\[\alpha_0\leq \alpha_1<\kappa\ \&\ \beta_0\leq \beta_1<\lambda\quad
\Rightarrow\quad \pi(\alpha_0,\beta_0)\leq \pi(\alpha_1,\beta_1).\]
For $i=\pi(\alpha_1,\beta_1)$ let $a_i=\{\pi(\alpha_0,\beta_0):\alpha_0 \leq
\alpha_1\ \&\ \beta_0\leq \beta_1\}\setminus \{i\}$. We say that a set
$b\subseteq \gamma$ is {\em closed\/} if $a_i\subseteq b$ for all $i\in
b$. It follows from our choice of $\pi$ that for each $i<\gamma$ we have 
\begin{enumerate}
\item[$(\circledast)_5$] $a_i\subseteq i$ and the sets $a_i,i,a_i\cup\{i\}$
  are closed.
\end{enumerate}
Now, by induction we define $\langle\bbP_i,\name{\bQ}_i, \name{F}^0_i,
\name{F}^1_i, \name{F}^\bbA_i,\name{F}^\bbB_i:i<\gamma\rangle$ and
$\bbP^*_b$ for closed $b\subseteq\gamma$ simultaneously proving the
correctness of the definition and the desired properties listed
below.\footnote{See \cite[3.1--3.7]{ShTh:524} for the order in which these
  should be shown.}  
\begin{enumerate}
\item[$(\circledast)_6$] $\langle\bbP_j,\name{\bQ}_i:j\leq\gamma,
  i<\gamma\rangle$ is a finite support iteration of ccc forcing notions. 
\item[$(\circledast)_7$] $\bbP^*_b=\big\{p\in\bbP_\gamma:\supp(p)\subseteq
  b\ \&\ p(i)$ is a $\bbP^*_{a_i}$--name (for a member of $\name{\bQ}_i$) for
  each $i\in\supp(p)\big\}$. 
\item[$(\circledast)_8$] $\bbP^*_b$ is a complete suborder of $\bbP_\gamma$,
  $\bbP^*_{a_i\cup\{i\}}$ is isomorphic with the composition $\bbP_{a_i}^*
  *\name{\bQ}_i$. 
\item[$(\circledast)_9$] $\name{\bQ}_i$ is a $\bbP^*_{a_i}$--name for the
  product\footnote{Since $\bbB^{\bV^{\bbP^*_{a_i}}}$ is $\sigma$--centered
    we know that the product is ccc.} $\bbA\times\bbB$.
\item[$(\circledast)_{10}$] $\nF^0_i,\nF^1_i,\nF^\bbA_i,\nF^\bbB_i$ are
  $\bbP^*_{a_i\cup\{i\}}$--names for the sets $\nF_\bbA,\nF_\bbB,\nF^*_\bbA,
  \nF^*_\bbB$ added by the forcings at the last coordinate of
  $\bbP^*_{a_i\cup\{i\}}\simeq \bbP^*_{a_i}*(\bbA\times\bbB)$. 
\item[$(\circledast)_{11}$] 
\begin{enumerate}
\item[(a)] $\bbP^*_i$ is a dense subset of $\bbP_i$ (for $i\leq\gamma$).
\item[(b)] If $q\in\bbP^*_\gamma$, then $q\rest b\in \bbP^*_b$.
\item[(c)] If $p,q\in\bbP^*_\gamma$, $p\leq q$ and $i\in\supp(q)$ then
  $p\rest a_i\leq_{\bbP^*_{a_i}} q\rest a_i$ and $q\rest
  a_i\forces_{\bbP^*_{a_i}} p(i)\leq q(i)$. 
\item[(d)] If $q\in\bbP^*_\gamma$, $p\in\bbP^*_b$ and $p\leq q$, then
  $p\leq_{\bbP^*_b} q\rest b$.
\item[(e)] If $q\in\bbP^*_b$, $p\in\bbP_\gamma^*$, $p\rest b\leq_{\bbP^*_b}
  q$ and $r$ is defined by 
\[r(\xi)=\left\{\begin{array}{ll}
q(\xi)&\mbox{ if }\xi\in b,\\
p(\xi)&\mbox{ otherwise}
\end{array}\right.\qquad\mbox{ for }\xi<\gamma\]
then $r\in\bbP^*_\gamma$ and $r\geq q$, $r\geq p$. 
\end{enumerate}
\end{enumerate}

Also,
\begin{enumerate}
\item[$(\circledast)_{12}$] if $\name{\tau}$ is a canonical\footnote{i.e.,
    determined in a standard way by a sequence of maximal antichains}
  $\bbP^*_\gamma$--name for a member of $\can$, then $\name{\tau}$ is a
  $\bbP^*_{a_i}$--name for some $i<\gamma$. 
\end{enumerate}
[Why? Note that if $(\alpha_n,\beta_n)\in\kappa\times\lambda$, $n<\omega$,
then there is $(\alpha^*,\beta^*)\in\kappa\times\lambda$ such that
$\alpha_n\leq\alpha^*$, $\beta_n\leq\beta^*$ for all $n<\omega$.]

The main technical point of our argument is given in the following
observation. 

\begin{enumerate}
\item[$(\circledast)_{13}$] Suppose $i,j<\gamma$, $i\notin a_j$, $j\notin
  a_i$, $i\neq j$, $\ell\in \{0,1\}$. Assume that $p\in\bbP^*_\gamma$,
  $\eta\in {}^{\textstyle n} 2$, $n<\omega$ and $p\forces_{\bbP^*_\gamma}
  [\eta]\cap\nF^\ell_i\neq \emptyset$. Then there are $\nu\in {}^{[n,N)}2$,
  $n<N<\omega$ and $q\geq_{\bbP^*_\gamma} p$ such that
\[q\forces_{\bbP^*_\gamma}\mbox{`` }[\eta\conc\nu]\cap\nF^\ell_i\neq
\emptyset \mbox{ and }\big(\forall y\in\nF^\ell_j\big)\big(y\rest
[n,N)\neq\nu\big)\mbox{ ''.}\]      
\end{enumerate}
[Why? Let us provide detailed arguments for $\ell=0$. By $(\circledast)_3^b+
(\circledast)_9+(\circledast)_{11}$ we may find $N>n$ and a condition
$p_0'\in\bbP^*_{a_j}$ such that $p_0'\geq p\rest a_j$ and 
\[\begin{array}{ll}
p_0'\forces_{\bbP^*_{a_j}}&\mbox{`` for each $\nu\in {}^{[n,N)}2$ there is
  $p_j\geq_{\name{\bQ}_j} p(j)$ such that}\\
&\quad p_j\forces_{\name{\bQ}_j}(\forall y\in \nF_\bbA)(y\rest [n,N)\neq \nu)
\mbox{ ''.}
\end{array}\] 
Let $p_0\in\bbP^*_\gamma$ be such that $p_0(\xi)=p_0'(\xi)$ for $\xi\in
a_j$ and $p_0(\xi)=p(\xi)$ otherwise (see $(\circledast)_{11}(e)$; so $p_0$
is a common extension of $p_0'$ and $p$). Note that $p_0(j)=p(j)$. Use
$(\circledast)^a_3$ to choose $\nu\in {}^{[n,N)}2$ and a condition
$p_1'\in\bbP^*_{a_i\cup\{i\}}$ such that $p_1'\geq p_0\rest (a_i\cup\{i\})$
and $p_1'\forces_{\bbP^*_{a_i\cup\{i\}}} [\eta\conc \nu]\cap\nF^0_i\neq
\emptyset$. Let $p_1\in\bbP^*_\gamma$ be such that $p_1(\xi)=p_1'(\xi)$ if
$\xi\in a_i\cup\{i\}$ and $p_1(\xi)=p_0(\xi)$ otherwise. Then $p_1$ is
stronger than both $p_1'$ and $p_0$, and $p_1(j)=p_0(j)=p(j)$. Hence
\[p_1\rest a_j\forces_{\bbP^*_{a_j}}\mbox{`` there is }p_j \geq_{\name{\bQ}_j}
p_1(j) \mbox{ such that }p_j\forces_{\name{\bQ}_j} (\forall y\in
\nF_\bbA)(y\rest [n,N)\neq \nu) \mbox{ ''.} \]
Let $q(j)$ be a $\bbP^*_{a_j}$--name for a $p_j$ as above and let
$q(\xi)=p_1(\xi)$ for $\xi\neq j$. Clearly $q\in\bbP^*_\gamma$ and 
$q\rest (a_j\cup\{j\})\forces_{\bbP^*_{a_j\cup\{j\}}}(\forall y\in
\nF^0_j)(y\rest [n,N)\neq \nu)$, and (as $q\rest (a_i\cup\{i\})=p_1\rest
(a_i\cup\{i\})= p_1'$)  $q\rest (a_i\cup\{i\})\forces_{\bbP^*_{a_i\cup
    \{i\}}} [\eta\conc\nu]\cap\nF^0_i\neq\emptyset$. Using $(\circledast)_8+
(\circledast)_{10}+(\circledast)_{11}$ we get that the condition $q$ is as
required. If $\ell=1$ then the arguments are similar, but instead of 
$(\circledast)^a_3,(\circledast)^b_3$ we use
$(\circledast)^a_4,(\circledast)^b_4$.]
\medskip

For $\alpha<\kappa$, $\beta<\lambda$ let $\name{B}^\bbA_{\alpha,\beta}
=\nF^\bbA_{\pi(\alpha,\beta)}$ and $\name{B}^\bbB_{\alpha,\beta}
=\nF^\bbB_{\pi(\alpha,\beta)}$. Immediately from $(\circledast)_{12}+
(\circledast)_1 + (\circledast)_2$ we conclude that 
\begin{enumerate}
\item[$(\circledast)_{14}$] $\forces_{\bbP^*_\gamma}\mbox{`` }
  \{\name{B}_{\alpha,\beta}^\bbA:\alpha<\kappa\ \&\ \beta<\lambda\}$ is a
  basis for $\cN$ and

\qquad $\{\name{B}_{\alpha,\beta}^\bbB:\alpha<\kappa\ \&\ \beta<\lambda\}$
is a basis for $\cM$ ''
\end{enumerate}
and
\begin{enumerate}
\item[$(\circledast)_{15}$] if $\alpha_0\leq\alpha_1<\kappa$,
  $\beta_0\leq\beta_1<\lambda$, $(\alpha_0,\beta_0)\neq (\alpha_1,\beta_1)$,
  then 
\[\forces_{\bbP^*_\gamma}\mbox{`` }
\name{B}_{\alpha_0,\beta_0}^\bbA\subsetneq \name{B}_{\alpha_1,\beta_1}^\bbA
\ \&\ \name{B}_{\alpha_0,\beta_0}^\bbB\subsetneq
\name{B}_{\alpha_1,\beta_1}^\bbB\mbox{ ''.}\]
\end{enumerate}
Also 
\begin{enumerate}
\item[$(\circledast)_{16}$] if $\alpha_0,\alpha_1<\kappa$,
  $\beta_0,\beta_1<\lambda$ and $\neg\big(\alpha_0\leq \alpha_1\ \&\ \beta_0
  \leq\beta_1\big)$ then 
\[\forces_{\bbP^*_\gamma}\mbox{`` }
\name{B}_{\alpha_0,\beta_0}^\bbA\nsubseteq \name{B}_{\alpha_1,\beta_1}^\bbA
\ \&\ \name{B}_{\alpha_0,\beta_0}^\bbB\nsubseteq
\name{B}_{\alpha_1,\beta_1}^\bbB\mbox{ ''.}\]
\end{enumerate}
[Why? If $\alpha_1\leq\alpha_0$ and $\beta_1\leq \beta_0$, then
$(\circledast)_{15}$ applies, so we may assume additionally 
$\neg\big(\alpha_1\leq\alpha_0\ \&\ \beta_1\leq \beta_0\big)$. Then our
assumptions on $\alpha_0,\alpha_1,\beta_0,\beta_1$ mean that, letting
$j=\pi(\alpha_0,\beta_0)$ and $i=\pi(\alpha_1,\beta_1)$, we have 
$i\notin a_j$, $j\notin a_i$, $i\neq j$. So using $(\circledast)_{13}$ for 
$\ell=0$ we easily build a $\bbP^*_\gamma$--name $\name{\eta}$ for a member
of $\can$ such that  
\[\forces_{\bbP^*_\gamma}\mbox{`` }\name{\eta}\in [\nF^0_i]\subseteq
\can\setminus \nF^\bbA_i=\can\setminus\name{B}^\bbA_{\alpha_1,\beta_1}\ \&\ 
\name{\eta}\in \nF^\bbA_j=\name{B}^\bbA_{\alpha_0,\beta_0}\mbox{ ''.}\] 
Similarly, using $(\circledast)_{13}$ for $\ell=1$ and interchanging the
role of $i$ and $j$ we may construct a $\bbP^*_\gamma$--name $\name{\eta}'$
such that $\forces_{\bbP^*_\gamma}\mbox{`` }\name{\eta}'\in
\name{B}^\bbB_{\alpha_0,\beta_0}\setminus\name{B}^\bbB_{\alpha_1,
  \beta_1}\mbox{ ''}$. ]
\medskip

Finally we note that $\bbP^*_\gamma$ has a dense subset of size
$\lambda^{\aleph_0}$, so we may choose it as our desired forcing
$\bQ^{\kappa,\lambda}$. 
\end{proof}

\begin{remark}
In a manner similar to our proof of $(\circledast)_{13}$ above one may argue
for the following stronger property. 
\begin{enumerate}
\item[$(\circledast)^2_{13}$]  Suppose $i,j<\gamma$, $i\notin a_j$, $j\notin
  a_i$, $i\neq j$, $\ell\in \{0,1\}$. Assume that $p\in\bbP^*_\gamma$,
  $\eta\in {}^{\textstyle n} 2$, $n<\omega$ and $p\forces_{\bbP^*_\gamma}
  [\eta]\cap\nF^\ell_i\neq \emptyset$. Then there are $\nu_0,\nu_1
\in {}^{[n,N)}2$, $n<N<\omega$ and $q\geq_{\bbP^*_\gamma} p$ such that
$\nu_0\neq \nu_1$ and 
\[q\forces_{\bbP^*_\gamma}\mbox{`` }[\eta\conc\nu_0]\cap\nF^\ell_i
\neq\emptyset\neq [\eta\conc\nu_1]\cap\nF^\ell_i\mbox{ and }
\big(\forall y\in\nF^\ell_j\big)\big(y\rest
[n,N)\notin\{\nu_0,\nu_1\}\big)\mbox{ ''.}\] 
\end{enumerate}
Then, if $i=\pi(\alpha_0,\beta_0)$, $j=\pi(\alpha_1,\beta_1)$, $i\notin
a_j$, $j\notin a_i$ and $i\neq j$, we may use this property to construct
$\bbP^*_\gamma$--names $\name{T}^\bbA$ and $\name{T}^\bbB$ for perfect
subtrees of $\fs$ such that 
\[\forces_{\bbP^*_\gamma}\mbox{`` }
[\name{T}^\bbA]\subseteq \name{B}_{\alpha_0,\beta_0}^\bbA\setminus
\name{B}_{\alpha_1,\beta_1}^\bbA
\ \mbox{ and }\ [\name{T}^\bbB]\subseteq \name{B}_{\alpha_0,\beta_0}^\bbB
\setminus\name{B}_{\alpha_1,\beta_1}^\bbB\mbox{ ''.}\]
Also $(\circledast)_{15}$ can easily strengthen to 
\begin{enumerate}
\item[$(\circledast)^+_{15}$]  if $\alpha_0\leq\alpha_1<\kappa$,
  $\beta_0\leq\beta_1<\lambda$, $(\alpha_0,\beta_0)\neq (\alpha_1,\beta_1)$,
  then 
\[\forces_{\bbP^*_\gamma}\mbox{`` both }
\name{B}_{\alpha_1,\beta_1}^\bbA\setminus \name{B}_{\alpha_0,\beta_0}^\bbA
\ \mbox{ and }\ \name{B}_{\alpha_1,\beta_1}^\bbB\setminus
\name{B}_{\alpha_0,\beta_0}^\bbB\mbox{ are uncountable ''.}\]
\end{enumerate}
Consequently, in $\bV^{\bbP^*_\gamma}$, the $\kappa\times\lambda$--bases
$\{\name{B}^\bbA_{\alpha,\beta}: \alpha<\kappa,\ \beta<\lambda\}$ and 
$\{\name{B}^\bbB_{\alpha,\beta}: \alpha<\kappa,\ \beta<\lambda\}$ have the
additional property that 
\[\forces_{\bbP^*_\gamma}\mbox{`` }\alpha_0>\alpha_1\ \vee \ \beta_0>
\beta_1\quad \Rightarrow\quad \big|\name{B}_{\alpha_0,\beta_0}^\bbA
\setminus\name{B}_{\alpha_1,\beta_1}^\bbA\big|=\big|
\name{B}_{\alpha_0,\beta_0}^\bbB\setminus
\name{B}_{\alpha_1,\beta_1}^\bbB\big|=2^{\aleph_0}\mbox{ ''.}\]
This is used is Ros{\l}anowski and Shelah \cite{RoSh:1022}.
\end{remark}

\begin{corollary}
\label{cor4}
It is consistent that
\begin{itemize}
\item ${\rm add}(\cN)={\rm add}(\cM)<{\rm cof}(\cN)={\rm
    cof}(\cM)=2^\omega$ (and hence the ideals $\cM,\cN$ do not poses tower
  bases) , and
\item there is a monotone $\Pi^0_3$ hull operation on $\cM$ with respect
  to $\cM$, and  
\item there is a monotone $\Pi^0_2$ hull operation on $\cN$ with respect to
  $\cN$, and  
\item there is a monotone $\Pi^0_3$ hull operation on $\cM\cap \cN$ with
  respect to $\cM\cap\cN$. 
\end{itemize}
\end{corollary}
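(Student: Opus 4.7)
The plan is to apply Theorem~\ref{modelwithmhg} with $\kappa=\aleph_1$ and $\lambda=\aleph_2$, starting from a ground model of CH, and to let $\bQ=\bQ^{\aleph_1,\aleph_2}$. Under CH one has $\aleph_2^{\aleph_0}=\aleph_2$ (by Hausdorff's formula), so $|\bQ|\leq\aleph_2$ and, since $\bQ$ is ccc, cardinals and cofinalities are preserved. The nice-name bound gives $(2^{\aleph_0})^{\bV^\bQ}\leq\aleph_2$, while Proposition~\ref{propA}(i) applied to the $\aleph_1\times\aleph_2$-bases delivered by Theorem~\ref{modelwithmhg} produces $\aleph_2$ pairwise distinct Borel sets in the extension, so $2^{\aleph_0}=\aleph_2$ there.

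The first bullet then reads off Proposition~\ref{propA}(iii): with $\alpha^*=\aleph_1$ and $\beta^*=\aleph_2$, one gets ${\rm add}(\cN)={\rm add}(\cM)=\aleph_1$ and ${\rm cof}(\cN)={\rm cof}(\cM)=\aleph_2=2^{\aleph_0}$. For the second and third bullets, Proposition~\ref{propA}(ii) converts the $\aleph_1\times\aleph_2$-bases into mhg-bases, and Proposition~\ref{gethull} then manufactures monotone Borel hulls; because the $\cN$-basis lies in $\Pi^0_2$ and the $\cM$-basis lies in $\Sigma^0_2\subseteq\Pi^0_3$, these hulls fall respectively in $\Pi^0_2$ and $\Pi^0_3$.

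The fourth bullet requires a fresh mhg-base for $\cM\cap\cN$. I would set $C_{\alpha,\beta}:=B^\bbA_{\alpha,\beta}\cap B^\bbB_{\alpha,\beta}$. Each $C_{\alpha,\beta}\in\Pi^0_3$: writing $B^\bbA_{\alpha,\beta}=\bigcap_m U_m$ with $U_m$ open, we get $C_{\alpha,\beta}=\bigcap_m(U_m\cap B^\bbB_{\alpha,\beta})$, and each $U_m\cap B^\bbB_{\alpha,\beta}$ is $\Sigma^0_2$. Cofinality of $\{C_{\alpha,\beta}\}$ in $\cM\cap\cN$ is routine: given $A\in\cM\cap\cN$ pick $(\alpha_1,\beta_1)$ and $(\alpha_2,\beta_2)$ from the $\cN$- and $\cM$-bases covering $A$ and take componentwise maxima. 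A final invocation of Proposition~\ref{gethull} then yields the desired $\Pi^0_3$-hull on $\cM\cap\cN$.

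The main obstacle --- and the only step not directly handed to us by earlier propositions --- is verifying the mhg property for $\{C_{\alpha,\beta}\}$: given $\langle C_{\alpha_i,\beta_i}:i<\omega_1\rangle$ one needs $i<j$ with $(\alpha_i,\beta_i)\leq(\alpha_j,\beta_j)$ coordinatewise. My plan is to argue that the product poset $\aleph_1\times\aleph_2$ carries no $\omega_1$-antichain: were all pairs pairwise incomparable, the $\alpha_i$'s would be pairwise distinct (otherwise two corresponding $\beta$'s would be comparable), and re-enumerating them in increasing $\alpha$-order would force the matching $\beta$'s to form a strictly decreasing $\omega_1$-sequence in $\omega_2$, which is impossible. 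This supplies the required comparable pair and completes the sketch.
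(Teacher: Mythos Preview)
Your approach is the same as the paper's (which merely cites Theorem~\ref{modelwithmhg} with $\kappa=\aleph_1$, $\lambda=\aleph_2$ over a CH ground model, together with Propositions~\ref{propA} and~\ref{gethull}), and your added detail for the $\cM\cap\cN$ case via $C_{\alpha,\beta}=B^\bbA_{\alpha,\beta}\cap B^\bbB_{\alpha,\beta}$ is the natural way to flesh this out.

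There is, however, a small logical slip in your final paragraph. The mhg property asks that every $\omega_1$--sequence $\langle(\alpha_i,\beta_i):i<\omega_1\rangle$ contain indices $i<j$ with $(\alpha_i,\beta_i)\leq(\alpha_j,\beta_j)$; the negation of this is \emph{not} that the pairs are pairwise incomparable, but only that for all $i<j$ either $\alpha_i>\alpha_j$ or $\beta_i>\beta_j$. Your antichain argument therefore does not directly close the case. The fix is easy: in any $\omega_1$--sequence of countable ordinals one can find an uncountable $I\subseteq\omega_1$ on which $\langle\alpha_i:i\in I\rangle$ is non-decreasing (either some value repeats uncountably often, or one extracts an increasing subsequence---alternatively use $\omega_1\to(\omega_1,\omega)^2$ on the coloring ``$\alpha_i\leq\alpha_j$'' versus ``$\alpha_i>\alpha_j$'', noting the latter cannot be homogeneous on an infinite set). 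On such an $I$ the badness hypothesis forces $\langle\beta_i:i\in I\rangle$ to be strictly decreasing, which is impossible. With this correction the argument goes through.
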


\begin{proof}
Start with a universe satisfying CH and use the forcing given by
Theorem \ref{modelwithmhg} for $\kappa=\aleph_1$ and
$\lambda=\aleph_2$. Propositions \ref{gethull} and \ref{propA} imply
that the resulting model is as required. 
\end{proof}

\begin{remark}
In Theorem \ref{modelwithmhg} we obtained a universe of set theory in which
both $\cN$ and $\cM$ have bases that are (with respect to the inclusion)
order isomorphic to $\kappa\times\lambda$. We may consider any partial order
$(S,\sqsubseteq)$ such that 
\begin{enumerate}
\item[(a)] $|S|=\lambda$ and $(S,\sqsubseteq)$ is well founded, and
\item[(b)] every countable subset of $S$ has a common $\sqsubseteq$--upper
  bound. 
\end{enumerate}
Then by a very similar construction we get a forcing extension in which both
$\cN$ and $\cM$ have bases order isomorphic to $(S,\sqsubseteq)$. If
additionally 
\begin{enumerate}
\item[(c)] for every sequence $\langle s_i:i<\omega_1\rangle\subseteq S$
  there are $i<j<\omega_1$ such that $s_i\sqsubseteq s_j$,
\end{enumerate}
then those bases will be mhg. (Note that forcings with the Knaster property
preserve the demand described in (c).) 
\end{remark}


\end{document}